\documentclass[12pt,a4paper]{amsart}
\usepackage[margin=2.2cm]{geometry}
\usepackage{amssymb,xspace}
\usepackage[hidelinks]{hyperref}
\numberwithin{equation}{section}
\theoremstyle{definition}
\newtheorem{definition}{Definition}[subsection]
\newtheorem{example}[definition]{Example}
\newtheorem{notation}[definition]{Notation}
\newtheorem{remark}[definition]{Remark}
\theoremstyle{plain}
\newtheorem{theorem}[definition]{Theorem}
\newtheorem{proposition}[definition]{Proposition}
\newtheorem{prop}[definition]{Proposition}
\newtheorem{lemma}[definition]{Lemma}
\newtheorem{corollary}[definition]{Corollary}

\usepackage[T2A,T1]{fontenc}\DeclareSymbolFont{cyrillic}{T2A}{cmr}{m}{n}
\DeclareMathSymbol{\Sha}{\mathalpha}{cyrillic}{216}
\newcommand{\sha}{\mathop{\scriptstyle\Sha}\nolimits}
\newcommand{\osha}{\widetilde\sha}
\renewcommand{\leq}{\leqslant}
\renewcommand{\le}{\leqslant}
\renewcommand{\geq}{\geqslant}
\renewcommand{\ge}{\geqslant}

\def\compositions#1#2{C_{\!#2}^{#1}}

\def\twocompositions#1#2#3{\compositions{#1}{#3}\times\compositions{#2}{#3}}
\usepackage{tikz-cd}

\pdfstringdefDisableCommands{%
  \def\infty{∞}%
}

\def\ainfty{\texorpdfstring{\ensuremath{A_\infty}\xspace}{A∞}}
\def\binfty{\texorpdfstring{\ensuremath{B_\infty}\xspace}{B∞}}

\def\pdot{\bullet}

\makeatletter
 
\DeclareRobustCommand{\qdot}{\mathbin{\mathpalette\qdot@\relax}}
 \newcommand{\qdot@}[2]{%
   \ooalign{%
     $\m@th#1\circ$\cr
     \hidewidth$\m@th#1\cdot$\hidewidth\cr
   }%
 }
 \makeatother

\def\deg#1{\ensuremath\left| #1\right|}

\def\NN{{\mathbb{N}}}

\def\K{{\mathbb K}}

\def\P{\operatorname{Prim}}
\def\As{\mathcal A\mathit s}
\def\dAs{\mathit d\As}
\def\sgn{\textrm{sgn}}
\def\im{\textrm{im}}
\def\inv{\textrm{inv}}
\def\id{\operatorname{id}}

\def\myclap#1{\makebox[0em]{$\scriptstyle #1$}}

\newcommand{\overbar}[1]{\mkern 1.5mu\overline{\mkern-1.5mu#1\mkern-1.5mu}\mkern 1.5mu}

\author[I.\ G\'alvez]{Imma G\'alvez-Carrillo}
\address{Universidad de M\'alaga, IMTECH-UPC, and Centre de Recerca Matemàtica
}
\email{imma.galvez@uma.es}
\author[M.\ Ronco]{María Ronco}
\address{IMAFI, Universidad de Talca
Campus Norte,
Chile
  }
\email{mronco@utalca.cl}
\author[A.\ Tonks]{Andy Tonks}
\address{Universidad de Málaga, Spain}
\email{at@uma.es}

\title{On differential Hopf algebras and $B_{\infty}$ algebras}

\begin{document}
\maketitle
\tableofcontents

\section*{Introduction}

We establish a structure theorem,  analogous to the classical result of Milnor and Moore, for differential graded Hopf algebras and $B_\infty$ algebras. This extends the work of Loday and the second author \cite{LodayRonco2006} where
the  ungraded non-differential case was treated: it was shown that any cofree Hopf algebra carries an underlying multibrace structure that restricts to the subspace of primitives, and conversely that any cofree Hopf algebra may be reconstructed from a multibrace algebra via a universal enveloping 2-associative algebra. 

In particular, in the case of differential graded Hopf algebras,  we establish an extension of the Loday--Ronco multibrace structure to a $B_{\infty}$ structure.

$B_\infty$ algebras first appeared in algebraic topology, implicitly, in work of Baues \cite{Baues} and play a central role in the study of the algebraic structure carried by the Hochschild cochain complex and its singular or Tate analogue \cite{BriggsRubio, ChenLiWang, Keller, LowenBergh,Wang}.
The explicit definition of a $B_\infty$ structure, on a vector space or a chain complex $A$, was first formulated in \cite[Section 5.2]{GeJo} as a differential $d$ and a product $\mu$ on the bar construction $(BA,\Delta)$ making it a differential bialgebra.
Unpacking this definition, a $B_\infty$ algebra is equipped with families of multilinear structure maps $(m_{n})_{n\ge1}$ and $(m_{i,j})_{i,j\ge1}$ of arities $n$ and $i+j$ satisfying certain `strong homotopy algebra'  relations.
This includes the notion of $A_{\infty}$ algebra, where we just require a differential coalgebra $(BA,\Delta,d)$, that is, the structure maps $m_{n}$.
It also includes the notion of multibrace algebra, where we just require a bialgebra $(BA,\Delta,\mu)$, that is, the structure maps $m_{i,j}$. Multibrace algebras were termed non-differential $\mathbf B_{\infty}$ algebras  in  \cite{LodayRonco2006} and $B_\infty$ algebras in \cite{patras}.

Our second aim is provide further insight into a construction of Börjeson~\cite{Kai}, who showed that any associative algebra with a square zero endomorphism (not required to be a derivation) carries an underlying $A_{\infty}$ structure. Markl in \cite{markl} explained the origin of this $A_{\infty}$ structure as a twisting (arising from the multiplication) of the given trivial $A_{\infty}$ algebra.

As we explain below, Börjeson's $A_\infty$ structure is just the right adjoint to the tensor (co)algebra functor. Another crucial observation is that the Loday--Ronco construction fits into this framework: we discover that the underlying multibrace structure carried by any 2-associative algebra, as defined in~\cite[Section 3.3 and Proposition 3.4]{LodayRonco2006},
originates from a twisting of a quasi-trivial multibrace algebra (a non-commutative quasi-shuffle algebra). See Example~\ref{ex:lr-mb} and Theorem~\ref{thm:lr-mb} below for details.

To keep the presentation self-contained and accessible we recall much material which should be known to the expert reader.
To simplify notation and remain consistent with \cite{Kai,LodayRonco2006,markl}
we choose to work with the tensor coalgebra $T^c(A)$ instead of the bar construction $BA=T^c(A[1])$.
Thus our definitions of $A_\infty$ and  $B_\infty$ differ from the classical ones mentioned above. The translation (of the degrees and signs involved) is straightforward.

\subsection*{Outline of results}
 
Suppose $(V,\partial,\pdot)$ is a differential graded algebra. Then (Proposition~\ref{prop:difstuf}) it has a quasi-trivial $B_\infty$ structure $d_{\partial}$, $\mu_{\pdot}$ with all structure maps zero except $m_{1}=\partial$ and $m_{1,1}=\pdot$.
More generally, let $\dAs^{1,1}$ be the category of differential graded algebras  $(V,\partial,\pdot)$ with an a priori unrelated extra associative binary operation $\qdot$ on $V$. Now (Theorem~\ref{thm:lr-mb}) we have an underlying $B_{\infty}$ algebra  functor
\begin{align*}
  \dAs^{1,1}\text{-alg} &\to B_{\infty}\text{-alg} \tag{$\star$} \\
(V,\partial,\pdot,\qdot)&\mapsto (V,d_{\partial}^{\qdot},\mu_{\pdot}^{\qdot})
\end{align*}
obtained by twisting the quasi-trivial $B_{\infty}$ structure using the extra multiplication $\qdot$.
Furthermore
there is a natural projection map
\[
\varepsilon_{V}:(T^{c}(V),d_{\partial}^{\qdot},\mu_{\pdot}^{\qdot},\qdot)\to (V,\partial,\pdot,\qdot),
\]
which is a  $\dAs^{1,1}$ algebra homomorphism, where the extra multiplication $\qdot $ on  $T^{c}(V)$ is concatenation of tensors.

 Any differential graded Hopf algebra $(H,\partial,\pdot,\Delta)$ which is cofree, $H=T^c(A)$, gives rise to a $\dAs^{1,1}$ algebra $(T^c(A),\partial,\pdot,\qdot)$ where $\qdot$ is concatenation of tensors, and indeed to a $B_{\infty}$ algebra $A$. Hence $(\star)$ defines the underlying $B_\infty$ structure on $H$. By induction on the arity of the structure maps (Theorem~\ref{thm:incl-hm-b-inf}) we show the natural inclusion map
\[
\eta_{A}:A\to T^{c}(A)=H
\]
is a $B_{\infty}$ algebra homomorphism, and in this sense the underlying $B_{\infty}$ structure on $H$ can be regarded as restricting to the primitives.

An analogue of the classical universal enveloping algebra can be defined
as a certain quotient of the free $\dAs^{1,1}$ algebra:
\begin{align*}
  B_{\infty}\text{-alg} &\to
  \dAs^{1,1}\text{-alg}
                         \\
U(A)&=F_{\dAs^{1,1}}(A)/I.
\end{align*}
This provides a left adjoint to the underlying $B_\infty$ algebra.
By virtue of the natural transformations $(\eta_A)$, $(\varepsilon_V)$ above,
an alternative construction (Proposition \ref{prop:tc=u}) of the left adjoint is given by
\begin{align*}
  B_{\infty}\text{-alg} &\to
  \dAs^{1,1}\text{-alg}
                         \\
(A,d,\mu)&\mapsto (T^c(A),d,\mu,\qdot).
\end{align*}
We can rephrase this: our underlying $B_{\infty}$ structure $(\star)$ is just the right adjoint to this essentially tautologous tensor (co)algebra functor.

Thus $U(A)$ and $T^{c}(A)$ are canonically isomorphic $\dAs^{1,1}$ algebras and, taking into account the deconcatenation comultiplication, we can regard the universal enveloping algebra as a $\dAs^{1,1}$ \emph{bialgebra}, and also as a cofree differential Hopf algebra. Now given a conilpotent $\dAs^{1,1}$ bialgebra we show (Theorem~\ref{thm:milnor-moore}) that the underlying $B_{\infty}$ structure restricts to the space of primitives and gives an adjoint equivalence of categories
\[\begin{tikzcd}[every arrow/.append style={shift left=0.7ex}]
 B_{\infty}\text{-alg} \ar[r,"U"]&\ar[l,"\P"]
  \dAs^{1,1}\text{-bialg}_{\text{conil}}\,.
  \end{tikzcd}
\]
That is, every $B_{\infty}$ algebra arises as the primitives of a $\dAs^{1,1}$ bialgebra.

In particular any conilpotent $\dAs^{1,1}$ bialgebra is cofree, and is essentially a cofree differential Hopf algebra.

\subsection*{Acknowledgments}
This work received support from a Spanish university requalification and mobility 
                grant (UP2021-034, UNI/551/2021) with NextGenerationEU funds, from research grants PID2019-103849GB-I00
        and
        PID2020-117971GB-C22
        (AEI/FEDER, UE) of Spain, and
                2021-SGR-0603  of Catalonia, and through 
                the Severo Ochoa and Mar\'ia de Maeztu Program for Centers and Units 
                of Excellence in R\&D (CEX2020-001084-M).

We would like to thank Vladimir Dotsenko and Martin Markl for helpful discussions, and the Institut de Matemàtiques, Universitat de Barcelona, and the Max-Planck-Institut Bonn, for their hospitality at different stages of this project.

\section{Coalgebras, algebras and bialgebras}
\subsection{Grading and Koszul signs}
We work with graded vector spaces $V$ over a field $\K$, and denote the degree of a homogeneous element $v\in V$ by $\deg v$.
The tensor product of graded spaces is graded by $\deg{a\otimes b}=\deg a+\deg b$. The tensor product of (homogeneous) graded maps is given by
\begin{align*}
(f\otimes g)(a\otimes b)&=(-1)^{\deg g\deg a}f(a)\otimes g(b),\label{eq:f tensor g}
\intertext{which implies}
(f\otimes g)\circ(h\otimes k)&=(-1)^{\deg g\deg h}(f\circ h)\otimes (g\circ k).
\end{align*}
These signs arise from\footnote{The symmetry is used to define the evaluation map $\text{ev}\colon\!\hom(V_1,W_1)\!\otimes\!\hom(V_2,W_2)\,\otimes\,V_1\!\otimes \!V_2\to W_1\!\otimes\! W_2$.} the natural symmetry isomorphism of the tensor product
\[\sigma\colon V\otimes W\cong W\otimes V,\qquad\sigma(a\otimes b)=(-1)^{\deg a\deg b}b\otimes a
  .
\]

Given a permutation $\sigma\in\Sigma_{n}$ we may denote by the same name the symmetry isomorphism of an $n$-fold tensor product that moves the $i$-th factor to the $\sigma(i)$-th position,
\begin{align*}
  V_{1}\otimes\cdots\otimes V_{n}
  &\quad \stackrel \sigma\longrightarrow\quad
    V_{\sigma^{-1}(1)}\otimes\cdots\otimes V_{\sigma^{-1}(n)},\\
  \sigma(v_1\otimes \dots\otimes v_n&)= (-1)^\kappa \;v_{\sigma^{-1}(1)}\otimes\dots\otimes v_{\sigma^{-1}(n)}.
\end{align*}
The formula for the Koszul signs arising in this symmetry isomorphism is
\begin{equation}
\label{eq:koszul-sgn}
\kappa\;\;=\;\;\sgn(v_1,\dots,v_n;\sigma)\;\;=\;\;\sum_{\myclap{(p,q)\in\inv(\sigma)}}
  \deg{v_p}
  \deg{v_q}.
\end{equation}
Here the sum is over inversions $(p,q)$ of $\sigma$, that is, all $p<q$ with $\sigma(p)>\sigma(q)$.

\subsection{Coalgebras}
A  \emph{coalgebra} (or more explicitly, a counital coassociative coalgebra over $\K$)
is a graded vector space $C$ equipped with a
comultiplication $\Delta\colon C\to C\otimes C$
and a
counit $\varepsilon\colon C\to \K$
which satisfy the coassociativity and counit laws
\begin{equation}
  (\Delta\otimes1)\Delta=(1\otimes\Delta)\Delta,\qquad
 \lambda(\varepsilon\otimes\id)\Delta=\id=\rho(\id\otimes\varepsilon)\Delta
.\end{equation}
The structure maps $\Delta,\varepsilon$ have degree zero, and
\begin{equation}\label{Ktrivial}
  \lambda:\K\otimes C\to C,\qquad \rho:C\otimes\K\to C
\end{equation}
  are the canonical isomorphisms which we often regard as identity maps and silently omit from the notation. 
A \emph{coalgebra map} is a linear map that preserves the comultiplication and counit.
We may write comultiplication using Sweedler notation without an explicit summation symbol,
\[
\Delta(x)\;\;=\;\;x_{(1)}\otimes x_{(2)}.
\]
The tensor product $C\otimes D$ of coalgebras is canonically a coalgebra,  with counit $\varepsilon(x\otimes y)=\varepsilon(x)\varepsilon(y)$ and comultiplication
\[\Delta\colon
  C\otimes D
  \xrightarrow{\Delta\otimes\Delta}
  (C\otimes C)\otimes (D\otimes D)
\xrightarrow{(23)}
(C\otimes D)\otimes (C\otimes D)
.
\]
As the two inner tensor factors are permuted, Koszul signs appear when applied to elements,
\[
\Delta(x\otimes y)\;\;=\;\;(-1)^{\deg{y_{(1)}}\deg{x_{(2)}}}(x_{(1)}\otimes y_{(1)})\otimes (x_{(2)}\otimes y_{(2)}).
\]

\subsection{Reduced coalgebras}

A \emph{unit} or \emph{coaugmentation} for a coalgebra $C$ is
a coalgebra map  $\eta\colon\K\to C$, or equivalently
a specified element $\eta(1_{\K})=1\in C$ which is group-like: $\Delta(1)=1\otimes1$. Then $\varepsilon(1)=1_{\K}$ and there is a splitting
\[
(\eta\varepsilon,J)\colon C\stackrel\cong\longrightarrow \K1\oplus \overbar C
\]
where $J=\id-\eta\varepsilon:C\to C$ and $\overbar C=\ker(\varepsilon)=\im(J)$.

For any unital coalgebra $(C,\Delta,\varepsilon,1)$  there is a well-defined coassociative operation
\begin{align*}
  &\overbar\Delta\colon\overbar C\to\overbar C\otimes\overbar C
\intertext{where, for $x\in\overbar C$,}
&\overbar\Delta(x)\;=\;\Delta(x) - x\otimes 1 - 1\otimes x
\;=\;J^{\otimes2}\Delta(x).
\end{align*}
The (non-unital, non-counital) coalgebra $(\overbar C,\overbar\Delta)$ is called the \emph{reduced} coalgebra.
This gives an equivalence between the categories of unital counital coalgebras and their maps and of non-unital non-counital algebras and their maps.

The kernel of $\overbar\Delta$ is the subspace $\P(C)$ of \emph{primitive} elements of $C$.

 \begin{remark}\label{rk:unit-law}
   Saying that a binary operation defined by a linear map $\mu\colon C\otimes C\to C$ is \emph{unital} may mean, according to context, either the condition $\mu(1\otimes 1)=1$ or the condition $\mu(1\otimes x)=\mu(x\otimes 1)=x$ for all $x\in C$.
   The latter condition will often be paraphrased as \emph{satisfies the unit law}. In this case specifying $\mu$ is equivalent to specifying the component $\overbar C\!\otimes\! \overbar C\to C$ as the remaining components
   are the canonical isomorphisms, cf. \eqref{Ktrivial},
\[\mu(\eta\otimes\id)
  \colon \K\!\otimes\! C
  \stackrel=\longrightarrow C,
  \qquad\qquad \mu(\id\otimes\eta)
  \colon
C\!\otimes\!\K
\stackrel=\longrightarrow C.
\]
\end{remark}

\subsection{The primitive filtration}
Consider the filtration of a unital coalgebra $C$ defined in~\cite[Section 3]{Q} by
$F_0C=\K 1$ and
\[
F_rC\;=\;\K 1\oplus\{ x\in \overbar C\;|\; \overbar\Delta(x)\in F_{r-1}C\otimes F_{r-1}C\}
\qquad (r\geq1).\]
Denote by
$\Delta^{(r)}\colon C\to C^{\otimes (r+1)}$ and $\overbar\Delta^{(r)}\colon\overbar C\to\overbar C^{\otimes (r+1)}$
the $r$-fold iterations of the coassociative comultiplications $\Delta$ and $\overbar\Delta$. For $r=0$ these are the identity maps,  and $\Delta^{(-1)}=\varepsilon:C\to\K$.
\begin{definition}The \emph{conilpotent radical} $R(C)$ of a unital coalgebra $C$ is the subcoalgebra $\bigcup F_rC$.
\end{definition}
\begin{definition}[Connected and conilpotent coalgebras]
  A connected coalgebra 
  (in the sense of \cite{LodayRonco2006})
  is a unital coalgebra $C$ such that
  the above filtration is exhaustive, so that $R(C)=C$.
  A conilpotent coalgebra is a unital coalgebra $C$ such that
  for each $x\in\overbar C$ there exists $r\in\NN$ with $\overbar \Delta^{(r)}(x)=0$.
\end{definition}

It is well known that connected coalgebras are conilpotent: it was noted in \cite{LodayRonco2006} and can be seen in Quillen's proof of \cite[Proposition 4.1]{Q}. The converse was pointed out in \cite[Appendix B]{GaKaTo}. We have:
\begin{lemma}
  A unital coalgebra $C$ is conilpotent if and only if it is connected. In fact,
  \[ F_rC\;\;=\;\;\K1\oplus \ker(\overbar\Delta^{(r)})\;\;=\;\;\ker(J^{\otimes (r+1)}\Delta^{(r)})
  \]
and $R(C)$ is the largest conilpotent subcoalgebra of $C$.
\end{lemma}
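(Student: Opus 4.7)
The plan is to establish the explicit formula
\[
F_rC \;=\; \K1\oplus\ker\bigl(\overbar\Delta^{(r)}\bigr) \;=\; \ker\bigl(J^{\otimes(r+1)}\Delta^{(r)}\bigr)
\]
by induction on $r$, and then read off the remaining assertions.

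The right-hand equality I would dispatch by direct unpacking. Since $J(1)=0$ and $\Delta^{(r)}(1)=1^{\otimes(r+1)}$, the map $J^{\otimes(r+1)}\Delta^{(r)}$ annihilates $\K1$. For $y\in\overbar C$, iterating the decomposition $\Delta(y)=y\otimes 1+1\otimes y+\overbar\Delta(y)$ shows that every summand of $\Delta^{(r)}(y)-\overbar\Delta^{(r)}(y)$ contains at least one $1$-tensor factor and is therefore killed by $J^{\otimes(r+1)}$; hence $J^{\otimes(r+1)}\Delta^{(r)}|_{\overbar C}=\overbar\Delta^{(r)}$, so the kernel of $J^{\otimes(r+1)}\Delta^{(r)}$ is exactly $\K1\oplus\ker\overbar\Delta^{(r)}$.

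For the left-hand equality the induction base $r=0$ is trivial since $\overbar\Delta^{(0)}=\id|_{\overbar C}$. In the inductive step, $x=c\cdot 1+y\in F_rC$ with $y\in\overbar C$ iff $\overbar\Delta(y)\in F_{r-1}C\otimes F_{r-1}C$. Because $\overbar\Delta(y)\in\overbar C\otimes\overbar C$ and $F_{r-1}C=\K1\oplus(F_{r-1}C\cap\overbar C)$, this reduces via the inductive hypothesis to $\overbar\Delta(y)\in\ker(\overbar\Delta^{(r-1)})^{\otimes 2}$. To match this with $\overbar\Delta^{(r)}(y)=0$, I would exploit coassociativity in its two forms $\overbar\Delta^{(r)}=(\overbar\Delta^{(r-1)}\otimes\id)\overbar\Delta=(\id\otimes\overbar\Delta^{(r-1)})\overbar\Delta$: the vanishing of $\overbar\Delta^{(r)}(y)$ forces $\overbar\Delta(y)$ into both $\ker(\overbar\Delta^{(r-1)})\otimes\overbar C$ and $\overbar C\otimes\ker(\overbar\Delta^{(r-1)})$, and the standard subspace-intersection identity $(U\otimes W)\cap(V\otimes U')=U\otimes U'$ then upgrades this to $\overbar\Delta(y)\in\ker(\overbar\Delta^{(r-1)})^{\otimes 2}$; the converse implication is immediate.

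From the formula, the conditions of connectedness ($C=\bigcup_r F_rC$) and conilpotence ($\overbar C=\bigcup_r\ker\overbar\Delta^{(r)}$) are literally the same. Each $F_rC$ is a subcoalgebra because $\Delta(x)=c(1\otimes 1)+y\otimes 1+1\otimes y+\overbar\Delta(y)$ lies in $F_rC\otimes F_rC$, hence $R(C)$ is a conilpotent subcoalgebra by construction. Conversely, any unital conilpotent subcoalgebra $D\subseteq C$ has its reduced coproduct obtained by restriction, so its own primitive filtration sits inside that of $C$, giving $D=R(D)\subseteq R(C)$. The delicate point throughout is the last step of the induction, where coassociativity has to be used \emph{symmetrically} and combined with the subspace-intersection identity in order to promote the one-sided vanishing $\overbar\Delta^{(r)}(y)=0$ to membership in a tensor product of kernels; the rest is bookkeeping.
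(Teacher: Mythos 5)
Your proof is correct and follows essentially the same route as the paper: both reduce the first equality to the identity $\ker(f)\otimes\ker(f)=\ker(f\otimes\id)\cap\ker(\id\otimes f)$ combined with the two coassociativity factorisations $\overbar\Delta^{(r)}=(\overbar\Delta^{(r-1)}\otimes\id)\overbar\Delta=(\id\otimes\overbar\Delta^{(r-1)})\overbar\Delta$, after checking directly that $J^{\otimes(r+1)}\Delta^{(r)}$ restricts to $\overbar\Delta^{(r)}$ on $\overbar C$. You are somewhat more explicit than the paper about the closing assertions (that connectedness and conilpotence then literally coincide, and that $R(C)$ is the largest conilpotent subcoalgebra), which the paper leaves implicit.
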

\begin{proof}
  Since $J^{\otimes (r+1)}\Delta^{(r)}(1)=0$ and
  $J^{\otimes (r+1)}\Delta^{(r)}(x)
  =\overbar\Delta^{(r)}(x)$ for $x\in\overbar C$ the second equality holds.
Now note that taking kernels and the tensor product commute in the following sense
\begin{align*}
\ker(\overbar\Delta^{(r)})\otimes \ker(\overbar\Delta^{(r)})
  &\;=\;\ker(\overbar\Delta^{(r)}\otimes \id)  \;\cap\; \ker(\id\otimes\overbar\Delta^{(r)}).
\intertext{The first equality therefore follows inductively from}
{\overbar\Delta}^{-1}\left(\ker(\overbar\Delta^{(r)})\otimes \ker(\overbar\Delta^{(r)})\right)
  &  \;=\;
\ker((\overbar\Delta^{(r)}\otimes \id)\overbar\Delta)\;\cap\;\ker((\id\otimes\overbar\Delta^{(r)})\overbar\Delta)
\;=\;\ker(\overbar\Delta^{(r+1)}).
\end{align*}

\end{proof}

\subsection{Comodules and coderivations}

A bicomodule over a coalgebra $C$ is a graded vector space $D$ with a coaction $\gamma:D\to C\otimes D\otimes C$, that is, a linear map satisfying
\begin{equation*}
  (\varepsilon\otimes\id\otimes\varepsilon)\gamma=\id,\qquad\qquad
 (\id\otimes\gamma\otimes\id)\gamma= (\Delta\otimes\id\otimes\Delta)\gamma.
\end{equation*}
Observe that the coaction $\gamma$ determines left and right coactions
\[(\id\otimes\id\otimes\varepsilon)\gamma\colon D\to C\otimes D,\qquad
  (\varepsilon\otimes\id\otimes\id)\gamma\colon D\to D\otimes C.\]
For example, any coalgebra can be regarded as a bicomodule over itself, with left and right coactions both given by the comultiplication, and $\gamma=\Delta^{(2)}$.

A coderivation on a $C$-bicomodule $D$
is a linear map  $d\colon D\to C$, not necessarily of degree 0, satisfying
\begin{align*}
\Delta d&\;\;=\;\;(\varepsilon\otimes d\otimes\id+\id\otimes d\otimes\varepsilon)\gamma
\intertext{where we have omitted the isomorphisms \eqref{Ktrivial}. This implies $\varepsilon d=0$ and indeed}
          \Delta^{(k-1)}d&\;\;=\;\,\sum_{\myclap{\substack{i,j\geq 0\\i+1+j=k}}}\;
(\Delta^{(i-1)}\otimes d\otimes \Delta^{(j-1)})\,\gamma,\qquad(k\geq 0).
\end{align*}

\subsection{Cofree coalgebras}\label{sub:tensorcoalg}
  The \emph{tensor coalgebra} on a graded vector space $V$ is given by
\[
T^c(V)=\bigoplus_{k=0}^\infty
                V^{\otimes k}
\]
with counit defined by the $0$-th projection $\varepsilon=p_0\colon T^c(V)\to V^{{\otimes 0}}=\K$, and comultiplication defined by deconcatenation
\[
  \Delta(v_1\cdots v_k)\;=\;\sum_{i=0}^kv_1\cdots v_i\;\otimes \;v_{i+1}\cdots v_k.
\]
As usual when working in $T^c(V)\otimes T^c(V)$  we write $v_{1}\cdots v_{k}\in T^{c}(V)$ for $v_{1}\otimes\dots\otimes v_{k}\in V^{\otimes k}$, and write $1=1_{\K}$ for the unit, given by the empty product, in the case $k=0$.
The reduced tensor coalgebra is given by
\[
  \overbar T^c(V)=\bigoplus_{k=1}^{\infty}V^{{\otimes k}},
\qquad\qquad
  \overbar \Delta(v_1\cdots v_k)=\sum_{i=1}^{k-1}v_1\cdots v_i\;\otimes \;v_{i+1}\cdots v_k.
\]
The unital coalgebra $T^c(V)$ is conilpotent, its space of primitives is $V^{\otimes 1}=V$, and it has filtration
\[
F_rT^c(V)\;=\;\bigoplus_{k=0}^rV^{\otimes k}.
\]
\begin{remark}[Cofreeness properties]\label{rk:tensorcoalg}
  Together with the first projection $p_1\colon T^c(V)\to V$ onto the primitives, the tensor coalgebra defines the free coalgebra in the category of conilpotent coalgebras:
\begin{enumerate}
  \item[a)] One can write the $k$-th projection as $p_k=p_1^{\otimes k}\Delta^{(k-1)}\colon T^c(V)\to V^{\otimes k}$ and
        the identity map as
        \[
        \qquad
    \id\,=\sum_{k=0}^{\infty}p_1^{\otimes k}\Delta^{(k-1)}\colon T^c(V)\to T^c(V).
        \]
  \item[b)] Any unital coalgebra map $f\colon C\to T^c(V)$ is determined by $f_1=p_1 f\colon C\to V$ via
\[
f \;= \;\sum_{k=0}^\infty p_1^{{\otimes k}} \Delta^{(k-1)}  f
   \;= \;\sum_{k=0}^\infty f_1^{\otimes k}\Delta_C^{(k-1)}\colon C\to  T^c(V).
\qquad\qquad
\]
        Conversely if $C$ is a conilpotent coalgebra then the formula on the right hand side gives a well-defined unital coalgebra map  $f=\sum f_k\colon C\to T^c(V)$ extending any linear map $f_1\colon C\to V$ with $f_{1}(1)=0$. Hence projection to the primitives gives a bijection
        \[
        p_1^*\colon\hom_{\textrm{conil}}(C,T^cV)\;\cong\; \hom_\K(\overbar C,V).
        \]
  \item[c)]
        Any coderivation $d$ on a $T^c(V)$-bicomodule $D$ is determined by $d_1=p_1d\colon D\to V$, via
\begin{align*}
d\;=\; 
\sum_{k=0}^\infty
  p_1^{\otimes k} \Delta^{(k-1)} d        &\;=\;\sum_{k=0}^\infty\;p_1^{\otimes k}\;\sum_{\myclap{\substack{i,j\geq 0\\i+1+j=k}}}\;
(\Delta^{(i-1)}\otimes d\otimes \Delta^{(j-1)})\,\gamma
  \;=\;
       \sum_{\myclap{\substack{i,j\geq 0
  }}}\;
(p_i\otimes d_1\otimes p_j)\,\gamma
\end{align*}
Conversely the formula on the right hand side gives a well-defined coderivation $d\colon D\to T^c(V)$ extending any linear map $d_1\colon D\to V$. Hence projection to the primitives gives a bijection
        \[
        p_1^*\colon\textrm{coder}(D,T^cV)\;\cong\; \hom_\K(D,V).
        \]
\end{enumerate}
\end{remark}
A unital coalgebra is termed cofree (more precisely: cofree among conilpotent coalgebras) if it is isomorphic to a tensor coalgebra $T^c(V)$, or equivalently if it comes equipped with a projection $p_1$ to its space of primitives having the same cofreeness properties as above.

\subsection{Bialgebras and unital infinitesimal bialgebras}
Recall that an \emph{algebra} (or more explicitly, a unital associative algebra over $\K$) is a graded vector space $B$ equipped with a multiplication $\mu\colon B\otimes B\to B$
and a unit $\eta\colon \K\to B$,  $\eta(1_{\K})=1$,
satisfying associativity and unit laws.
A differential graded algebra is an algebra $(B,\mu,1)$ equipped with a map $d\colon B\to B$ of degree $-1$ satisfying $d^{2}=0$ and
$d\mu=\mu(\id\otimes d+d\otimes\id)$.

An \emph{algebra map} is a linear map preserving the unit and multiplication.

An algebra $B$ is \emph{augmented} or \emph{counital} if it is equipped with an algebra map $\varepsilon\colon B\to\K$.

The tensor product $A\otimes B$ of algebras has a canonical algebra structure
\[
  A\otimes B\:\otimes \:A\otimes B
  \xrightarrow{(23)}
  A\otimes A\:\otimes \:B\otimes B
\xrightarrow{\mu_{A}\otimes\mu_{B}}A\otimes B.
\]

\begin{definition}\label{def:bialg}
  Let $B$ be a graded vector space with structures of
  an algebra  $(B,\mu,\eta)$
  and of
  a coalgebra $(B,\Delta,\varepsilon)$.
  Then the
  structure $(B,\mu,\eta,\Delta,\varepsilon)$ is termed
  \begin{enumerate}
    \item \label{BIalg}
          a  \emph{bialgebra}
          if $\mu\colon B\otimes B\to B$ and $\eta\colon\K\to B$ are coalgebra maps,
          or equivalently
          if $\Delta\colon B\to B\otimes B$ and $\varepsilon\colon B\to\K$ are algebra maps,
    \item \label{UIalg} a \emph{unital infinitesimal bialgebra} if
          \begin{align*}
            \Delta(xy)\;&=\;
 x_{(1)}\otimes x_{(2)}y
 \;+\;
 xy_{(1)}\otimes y_{(2)}
      \;-\;x\otimes y.
          \end{align*}
          An equivalent definition is given in Lemma \ref{lem:inf-alt} and Corollary \ref{cor:inf-alt}.
\end{enumerate}
    We use Sweedler notation for comultiplication and denote the multiplication $\mu$ by juxtaposition.

    A bialgebra or a unital infinitesimal bialgebra is termed \emph{conilpotent} if it is conilpotent as a unital coalgebra, and is termed \emph{cofree} if is is cofree as a unital coalgebra.

  \end{definition}
Any conilpotent bialgebra has a canonical Hopf algebra structure, though not all Hopf algebras are conilpotent.

\begin{example}[Shuffle bialgebra]\label{ex:sha}
The shuffle bialgebra  $T^{\sha}(V)$ on a graded vector space $V$ has underlying coalgebra $T^c(V)$ with $\Delta$ given by deconcatenation as described in section \ref{sub:tensorcoalg}. The shuffle product $\sha\colon T^c(V)\otimes T^c(V)\to T^c(V)$
is unique coalgebra map that extends, via the cofreeness property of Remark \ref{rk:tensorcoalg} (b), the linear map
\begin{equation}
\label{eq:sha1}
\sha_1=
\varepsilon\otimes p_1+p_1 \otimes\varepsilon
\colon \overbar{T^c(V)\otimes T^c(V)}\longrightarrow \K\otimes V\;\oplus\; V\otimes \K\longrightarrow V.
\end{equation}
Observe that $\sha_1$ is zero on $\overbar T^c(V)\otimes \overbar T^c(V)$.
The unit laws and associative law for $\sha$ follow by standard cofreeness arguments:
the coalgebra maps $\sha(\eta\otimes\id)$
and $\sha(\id\otimes\eta)$ are the identity since their compositions with $p_{1}$ are just $p_{1}$,
and similarly the coalgebra maps $\sha(\sha\otimes\id)$, $\sha(\id\otimes\sha)\colon T^c(V)^{\otimes 3}\to T^c(V)$ are equal since one can check their projections to $V$ are equal.
Observe that $x\sha_1y$ is symmetric in $x,y$ and is non-zero only when $x y$ is a tensor of length exactly 1. It follows that the extension $x \sha y$ is graded commutative and that each projection $\sha_{k}=\sha_1^{\otimes k}\Delta^{(k-1)}$ is non-zero only when $x y$ is a tensor of total length exactly $k$. On elements we have
\begin{equation}
\label{eq:sha}
v_1\cdots v_i\;\sha\; v_{i+1}\cdots v_k\;=\;\sum_{\sigma}(-1)^\kappa
\; v_{\sigma^{-1}(1)}\cdots v_{\sigma^{-1}(k)}
  .
  \end{equation}
  Here the sum is over all $(i,k-i)$-shuffles, that is, permutations $\sigma\in\Sigma_k$ satisfying $\sigma(j)<\sigma(j+1)$ for all $j\neq i$, $1\leq j\leq k-1$, with the Koszul signs that were defined in  \eqref{eq:koszul-sgn}.
\end{example}

The notion of unital infinitesimal bialgebra was introduced in \cite{Lod,LodayRonco2006}, modifying the non-unital definition of infinitesimal bialgebras due to Joni and Rota in \cite{JoniRota}. The unital infinitesimal relation can be expressed
  \begin{align}  \label{eq:inf-deltabar}
    \overbar\Delta(xy)&=
    \overbar\Delta(x)(1\otimes y)
    +
    (x\otimes 1)\overbar\Delta(y)
    +
    x\otimes y
\intertext{and more generally one has}
  \overbar\Delta^{(n)}(xy)&=\sum_{\myclap{\substack{r,s\geq0\\r+s=n}}}
    \bigl(\overbar\Delta^{(r)}(x)\otimes
1^{\otimes s}
    \bigr)
    \bigl(
    1^{\otimes r}
    \otimes\overbar\Delta^{(s)}(y)\bigr)  +\sum_{\myclap{\substack{r,s\geq0\\r+s=n-1}}}\overbar\Delta^{(r)}(x)\otimes\overbar\Delta^{(s)}(y)
    .
    \label{eq:inf-deltabar-n}
  \end{align}
  \begin{lemma}\label{lem:inf-delta-prim}
  Let $(W,\qdot,1,\Delta,\varepsilon)$ be a unital infinitesimal bialgebra.
\begin{enumerate}
\item It $v_{1},\dots,v_{k}\in W$ are primitive then
\[
\Delta(v_{1}\qdot\cdots\qdot v_{k})\;\;=\;\;
\sum_{i=0}^kv_1\qdot\cdots\qdot v_i\;\otimes\; v_{i+1}\qdot\cdots\qdot v_k.
        \]
  \item If $x,y\in \overbar W$
        with $\overbar\Delta^{(i)}(x)=0$ and $\overbar\Delta^{(j)}(y)=0$  then
        $\overbar\Delta^{(i+j)}(x\qdot y)=0$.
\newline
        In particular the conilpotent radical $R(W)$ is a unital infinitesimal subbialgebra of $W$.
\end{enumerate}
\end{lemma}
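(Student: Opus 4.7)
My plan is to prove (1) by induction on $k$ using the defining unital infinitesimal relation, (2) by a direct application of the iterated version \eqref{eq:inf-deltabar-n}, and then to derive the subbialgebra claim as a short consequence.

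For (1), the base cases $k = 0, 1$ are immediate from $\Delta(1) = 1\otimes 1$ and the definition of primitive. For the inductive step I would set $x = v_1 \qdot \cdots \qdot v_{k-1}$ and $y = v_k$, and apply Definition \ref{def:bialg}(\ref{UIalg}). Primitivity of $y$ gives $y_{(1)}\otimes y_{(2)} = y\otimes 1 + 1\otimes y$, so the middle summand $xy_{(1)}\otimes y_{(2)}$ produces $xy\otimes 1 + x\otimes y$. The piece $x\otimes y$ cancels the subtracted $-x\otimes y$; the first summand $x_{(1)}\otimes x_{(2)}y$ delivers via the induction hypothesis the $i = 0,\ldots,k-1$ terms of the claimed formula; and the remaining $xy\otimes 1$ is the $i = k$ term.

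For (2), the key preliminary observation---and the main conceptual content of the argument---is that $\overbar\Delta^{(i)}(x) = 0$ forces $\overbar\Delta^{(r)}(x) = 0$ for every $r\geq i$, since by coassociativity $\overbar\Delta^{(r)}$ factors through $\overbar\Delta^{(i)}$ (e.g.\ as $(\id^{\otimes i}\otimes\overbar\Delta^{(r-i)})\,\overbar\Delta^{(i)}$). Substituting into \eqref{eq:inf-deltabar-n} with $n = i+j$, in each summand non-vanishing requires both $\overbar\Delta^{(r)}(x)\neq 0$ and $\overbar\Delta^{(s)}(y)\neq 0$, hence $r\leq i-1$ and $s\leq j-1$, so $r+s\leq i+j-2$. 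This is incompatible with $r+s\in\{n-1,n\}$, so both sums vanish and $\overbar\Delta^{(i+j)}(xy) = 0$.

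For the subbialgebra claim, the preceding lemma identifies $F_rW = \K 1\oplus\ker(\overbar\Delta^{(r)})$ and tells us that $R(W) = \bigcup_r F_rW$ is already a subcoalgebra; decomposing $x\in F_iW$ and $y\in F_jW$ as multiples of $1$ plus elements of $\overbar W$ and using bilinearity, part (2) immediately yields $xy\in F_{i+j}W\subseteq R(W)$, so $R(W)$ is multiplicatively closed and inherits the unital infinitesimal relation from $W$. I do not foresee any serious obstacle here: the induction in (1) and the index inequality in (2) are both mechanical once the coassociativity observation is in place, and the subbialgebra statement then reduces to routine bookkeeping.
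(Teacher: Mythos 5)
Your proof is correct and follows essentially the same route as the paper: part (1) by induction on $k$ via the unital infinitesimal relation (the paper uses the reduced form \eqref{eq:inf-deltabar} with $\overbar\Delta(v_k)=0$, you use the unreduced form with $\Delta(v_k)=v_k\otimes1+1\otimes v_k$ — the same computation), and part (2) directly from \eqref{eq:inf-deltabar-n}. You actually supply the one detail the paper leaves implicit in its ``clear from \eqref{eq:inf-deltabar-n}'', namely the coassociativity observation that $\overbar\Delta^{(i)}(x)=0$ forces $\overbar\Delta^{(r)}(x)=0$ for all $r\geq i$, which is exactly what makes the index count $r+s\leq i+j-2$ close the argument.
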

\begin{proof}
  (1): For $x=v_{1}\qdot\cdots\qdot v_{k-1}$, and $y=v_{k}$ primitive, we have
$
     \overbar\Delta(x\qdot v_{k})
     =\overbar\Delta(x)\qdot(1\otimes v_{k})+x\otimes v_{k}$ from   \eqref{eq:inf-deltabar},
     and the result follows
   by induction on $k$.\quad
(2): This is clear from   \eqref{eq:inf-deltabar-n}.
 \end{proof}
In the following Lemma and Corollary we see that a unital infinitesimal bialgebra structure can also be expressed in terms of the comultiplication $\Delta\colon B\to B\otimes B$ being an algebra map
  or the multiplication $\mu\colon B\otimes B\to B$ being a coalgebra map if we do not use the canonical (co)multiplication on the vector space $B\otimes B$
 but an alternative structure:
\begin{lemma}\label{lem:inf-alt}
  Let $(B,\Delta,\varepsilon,1)$ be a unital coalgebra. Then
  \begin{align*}
    \Delta'(x\otimes y)&=
                         (x_{(1)}\otimes 1)\otimes (x_{(2)}\otimes y)
+
                         (x\otimes y_{(1)})\otimes (1\otimes y_{(2)})
-
                         (x\otimes 1)\otimes (1\otimes y),
  \end{align*}
  defines a unital coalgebra structure
  $(B^{\otimes 2},\Delta',\varepsilon^{(2)},1\otimes 1)$ with the canonical unit and counit.\smallskip

  Now let $(B,\qdot,1,\varepsilon)$ be a counital algebra. Then
there is a counital algebra structure defined on $B^{\otimes 2}$ by $1^{\otimes 2}$, $\varepsilon^{\otimes 2}$ and
  $\qdot'\colon B^{\otimes 2}\otimes B^{\otimes 2}\to B^{\otimes 2}$ where
  \begin{align*}
    (x_1\otimes x_2)\qdot' (y_1\otimes y_2)
&=
\varepsilon(y_1)\;x_1\otimes x_2{\qdot} y_2
\,+\,
\varepsilon(x_2)\;x_1{\qdot} y_1\otimes y_2
\,-\,
\varepsilon(x_2 {\qdot} y_1)\;x_1\otimes y_2.
  \end{align*}
\end{lemma}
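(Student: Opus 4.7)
The two claims are formally dual to each other: under the duality exchanging coalgebra and algebra structures, the formula $\Delta'$ of part 1 (involving $\Delta$ and the unit $1$) dualises to the formula $\qdot'$ of part 2 (involving $\qdot$ and the counit $\varepsilon$). Under this duality the grouplike identity $\Delta(1)=1\otimes 1$, essential in part 1, becomes precisely the algebra-map property $\varepsilon(a\qdot b)=\varepsilon(a)\varepsilon(b)$, which is automatic in the counital algebra hypothesis of part 2. I will outline part 1 in detail and then explain briefly how part 2 follows by the dual argument.

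For part 1, the counit law and the grouplike property of $1\otimes 1$ are immediate. Applying $\varepsilon^{\otimes 2}$ to the first tensor factor of $\Delta'(x\otimes y)$, using $\varepsilon(1)=1$ and the counit law for $\Delta$, the first summand reduces to $x\otimes y$, while the second and third both collapse to $\varepsilon(x)\,(1\otimes y)$ and cancel; the right counit is symmetric. Grouplike-ness of $1\otimes 1$ follows directly from $\Delta(1)=1\otimes 1$, which makes the three summands of $\Delta'(1\otimes 1)$ all equal to $(1\otimes 1)\otimes(1\otimes 1)$, giving $1+1-1=1$ copy.

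The main step is coassociativity. The key observation is that $\Delta(1)=1\otimes 1$ causes two of the three summands of $\Delta'$ to cancel whenever one argument equals $1$, giving the clean formulas
\[
\Delta'(x\otimes 1)=(x_{(1)}\otimes 1)\otimes(x_{(2)}\otimes 1),\qquad \Delta'(1\otimes y)=(1\otimes y_{(1)})\otimes(1\otimes y_{(2)}).
\]
Using these, the a priori nine-term expansions of $(\Delta'\otimes\id)\Delta'(x\otimes y)$ and $(\id\otimes\Delta')\Delta'(x\otimes y)$ each collapse to the same five-term sum in $B^{\otimes 6}$: three positive terms obtained by splitting $x$ twice via $\Delta^{(2)}$, splitting $y$ twice via $\Delta^{(2)}$, and a mixed splitting of $x$ in the first factor and $y$ in the second, together with two negative terms of the form $(x_{(1)}\otimes 1)\otimes(x_{(2)}\otimes 1)\otimes(1\otimes y)$ and $(x\otimes 1)\otimes(1\otimes y_{(1)})\otimes(1\otimes y_{(2)})$. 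Coassociativity of $\Delta$ on $B$, written as $\Delta^{(2)}(x)=x_{(1)}\otimes x_{(2)}\otimes x_{(3)}$, identifies the two sides term by term.

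Part 2 proceeds along the same lines in the dual calculus. The unit laws for $1\otimes 1$ follow from $1\qdot y=y=y\qdot 1$ and $\varepsilon(1)=1$ together with cross-cancellation of two of the three summands of $\qdot'$; that $\varepsilon^{\otimes 2}$ is an algebra map is a one-line verification using $\varepsilon(a\qdot b)=\varepsilon(a)\varepsilon(b)$. Associativity is again a nine-term expansion, which collapses to a five-term comparison via the dual identities $(x_1\otimes 1)\qdot'(y_1\otimes y_2)=(x_1\qdot y_1)\otimes y_2$ and $(x_1\otimes x_2)\qdot'(1\otimes y_2)=x_1\otimes(x_2\qdot y_2)$, after which associativity of $\qdot$ on $B$ identifies the two sides. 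The main obstacle in either direction is simply the bookkeeping of the nine-term expansion; but the cancellations induced by $\Delta(1)=1\otimes 1$ (respectively by $\varepsilon$ being an algebra map) reduce this to a routine five-term check.
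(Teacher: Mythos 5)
Your proof is correct and follows essentially the same route as the paper's: a direct verification that $(\Delta'\otimes\id)\Delta'$ and $(\id\otimes\Delta')\Delta'$ agree (the paper writes out exactly the five-term sum you describe), together with the easy counit and grouplike checks, and the dual computation for $\qdot'$. Your observations that $\Delta'(x\otimes 1)$ and $\Delta'(1\otimes y)$ collapse to two terms, and that multiplicativity of $\varepsilon$ is the dual of $\Delta(1)=1\otimes1$, are accurate and merely streamline the same bookkeeping.
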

\begin{proof}
For the first part, we check that $(\Delta'\otimes\id)$ and  $(\id\otimes\Delta')$ applied to $\Delta'(x\otimes y)$ both give
  \begin{align*}
     (x_{(1)} \otimes 1)\otimes (x_{(2)} \otimes 1)\otimes (x_{(3)} \otimes y)
    &+ (x \otimes y_{(1)})\otimes (1 \otimes y_{(2)})\otimes (1 \otimes y_{(3)})
  \\{}+   (x_{(1)} \otimes 1)\otimes (x_{(2)} &\otimes y_{(1)})\otimes (1 \otimes y_{(2)})
    \\
{}-  (x \otimes 1)\otimes ( 1\otimes y_{(1)})\otimes (1 \otimes y_{(2)})
  &-  (x_{(1)} \otimes 1)\otimes (x_{(2)} \otimes 1)\otimes (1 \otimes y)
  \end{align*}
so $\Delta'$ is coassociative. It is easy to verify that $(\varepsilon^{(2)}\otimes\id)$ and  $(\id\otimes\varepsilon^{(2)})$ applied to $\Delta'(x\otimes y)$ both give $x\otimes y$ and that $1\otimes 1$ is grouplike.

The second part is similar.
\end{proof}
\begin{corollary}\label{cor:inf-alt}
  Suppose $B$ is a graded vector space equipped with a
   unital counital algebra structure $(B,\qdot,1,\varepsilon)$
   and a unital counital coalgebra structure $(B,\Delta,\varepsilon,1)$.
  Then the following are equivalent:
  \begin{enumerate}
    \item $(B,\qdot,1,\Delta,\varepsilon)$ is a unital infinitesimal bialgebra.
    \item The multiplication is a coalgebra map $\qdot\colon(B\otimes B,\Delta')\to (B,\Delta)$.
    \item The comultiplication is an algebra map $\Delta\colon(B,\qdot)\to (B\otimes B,\qdot')$.
  \end{enumerate}
\end{corollary}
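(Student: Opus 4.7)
The plan is to prove both equivalences (1) $\Leftrightarrow$ (2) and (1) $\Leftrightarrow$ (3) by direct Sweedler-notation computation, leaning on the explicit formulas for $\Delta'$ and $\qdot'$ furnished by Lemma \ref{lem:inf-alt}. First I would dispose of the auxiliary unit/counit conditions that are bundled into being a coalgebra map (resp.\ algebra map): for (2) we need $\varepsilon\circ\qdot=\varepsilon\otimes\varepsilon$, which is exactly the statement that $\varepsilon$ is an algebra map and so is part of the hypothesis that $(B,\qdot,1,\varepsilon)$ is counital; for (3) we need $\Delta(1)=1\otimes 1$, which is the statement that $1$ is grouplike, part of the hypothesis that $(B,\Delta,\varepsilon,1)$ is unital. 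So the content of (2) and (3) reduces in each case to a single identity on $B\otimes B\to B\otimes B$.

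For (1) $\Leftrightarrow$ (2), I would apply $\qdot\otimes\qdot$ to the three-term formula
\[
\Delta'(x\otimes y)=(x_{(1)}\otimes 1)\otimes(x_{(2)}\otimes y)+(x\otimes y_{(1)})\otimes(1\otimes y_{(2)})-(x\otimes 1)\otimes(1\otimes y).
\]
Because $\qdot$ has degree zero no Koszul signs appear, and the unit law $x\qdot 1=1\qdot x=x$ collapses the expression to
\[
x_{(1)}\otimes x_{(2)}\qdot y\;+\;x\qdot y_{(1)}\otimes y_{(2)}\;-\;x\otimes y,
\]
which is precisely the right-hand side of the defining identity of a unital infinitesimal bialgebra. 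Hence (2) is equivalent to $\Delta\qdot=(\qdot\otimes\qdot)\Delta'$ holding pointwise, i.e.\ to (1).

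For (1) $\Leftrightarrow$ (3), I would compute $\Delta(x)\qdot'\Delta(y)$ by substituting $\Delta(x)=x_{(1)}\otimes x_{(2)}$, $\Delta(y)=y_{(1)}\otimes y_{(2)}$ into the formula for $\qdot'$. The three resulting terms simplify through counit laws applied in the appropriate slot: $\varepsilon(y_{(1)})y_{(2)}=y$ reduces the first term to $x_{(1)}\otimes x_{(2)}\qdot y$; $\varepsilon(x_{(2)})x_{(1)}=x$ reduces the second to $x\qdot y_{(1)}\otimes y_{(2)}$; and the multiplicativity of $\varepsilon$ gives $\varepsilon(x_{(2)}\qdot y_{(1)})=\varepsilon(x_{(2)})\varepsilon(y_{(1)})$, so that the third term reduces to $x\otimes y$. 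Thus $\qdot'(\Delta\otimes\Delta)=\Delta\qdot$ is again equivalent to the unital infinitesimal relation, proving (1) $\Leftrightarrow$ (3). There is no real obstacle beyond careful bookkeeping; the calculations are exactly those that motivate the formulas for $\Delta'$ and $\qdot'$, and the degree-zero nature of $\qdot$, $\varepsilon$ together with the unit/counit isomorphisms \eqref{Ktrivial} ensures no signs intervene.
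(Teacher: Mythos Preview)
Your proposal is correct and is exactly the verification the paper leaves implicit: the paper states the corollary without proof, treating it as immediate from the formulas for $\Delta'$ and $\qdot'$ in Lemma~\ref{lem:inf-alt}, and your two Sweedler computations are precisely the unwinding of those formulas that makes the equivalence transparent.
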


 \begin{example}[Fundamental infinitesimal bialgebra]\label{ex:fund-inf-bialg}
   Let $V$ be a graded vector space and consider the unital infinitesimal bialgebra $(T^{fc}(V),\qdot,1,\Delta,\varepsilon)$ defined as follows. As a graded algebra it is the \emph{tensor algebra} $T(V)=\bigoplus_{n\geq0}V^{\otimes n}$ with multiplication $\qdot$ defined by concatenation of tensors. This is the free algebra on $V$ so by Corollary \ref{cor:inf-alt} (3) a unital infinitesimal comultiplication $\Delta$ can be specified by giving its values on the generators. In particular we can define
   \[
\Delta\colon(T(V),\qdot)\to (T(V)\otimes T(V),\qdot'),\qquad \Delta(v)=v\otimes 1+1\otimes v\quad (v\in V).
\]
so that the generators are primitive. Now by Lemma \ref{lem:inf-delta-prim} (1) we see that $\Delta$ coincides with the comultiplication given by deconcatenation of tensors.
That is, the fundamental infinitesimal bialgebra is at once the free
algebra and the cofree conilpotent coalgebra on $V$.
\end{example}
\begin{prop} The functors $T^{fc}$ and $\P$ are adjoint:
  for any vector space $A$ and any unital infinitesimal bialgebra $W$, there is a natural bijection between linear maps $f\colon A\to\P(W)$ and unital infinitesimal bialgebra homomorphisms $F\colon T^{fc}(A)\to W$,
  such that $f(v)=F(v)$ for all $v\in A$.
The unit of the adjunction is the natural isomorphism $A\cong \P (T^{fc}(A))$.
\end{prop}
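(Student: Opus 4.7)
The plan is to exploit the double universal property recorded in Example~\ref{ex:fund-inf-bialg}: the fundamental infinitesimal bialgebra $T^{fc}(A)$ is simultaneously the free algebra and the cofree conilpotent coalgebra on $A$. Combined with the reformulation in Corollary~\ref{cor:inf-alt}(3), which says that being a unital infinitesimal bialgebra is equivalent to the comultiplication being an algebra map into the modified structure $\qdot'$, this allows everything to be phrased in terms of algebra maps and concluded by the freeness of $T^{fc}(A)$ as an algebra.

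First I would construct the bijection. Given a linear map $f\colon A\to\P(W)$, composition with the inclusion $\P(W)\hookrightarrow W$ together with the universal property of $T^{fc}(A)=T(A)$ as the free algebra on $A$ yields a unique unital algebra map $F\colon T^{fc}(A)\to W$ with $F(v)=f(v)$ for all $v\in A$. Conversely, any unital infinitesimal bialgebra homomorphism $F\colon T^{fc}(A)\to W$ restricts to a linear map $f=F|_A\colon A\to\P(W)$, since bialgebra maps send primitives to primitives and the primitives of $T^{fc}(A)$ are precisely $A$. By uniqueness in the free-algebra construction these operations are mutually inverse.

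The main obstacle is showing that the algebra map $F$ produced above is automatically a coalgebra map. I would compare the two composites
\[
\Delta_W\circ F,\qquad (F\otimes F)\circ\Delta_{T^{fc}(A)}\colon T^{fc}(A)\longrightarrow W\otimes W.
\]
By Corollary~\ref{cor:inf-alt}(3), both $\Delta_W$ and $\Delta_{T^{fc}(A)}$ are algebra maps into the corresponding $\qdot'$-algebras, and a direct unpacking of the formula for $\qdot'$ shows that $F\otimes F\colon (T^{fc}(A)\otimes T^{fc}(A),\qdot')\to(W\otimes W,\qdot')$ is an algebra map because $F$ preserves $\qdot$, $1$ and $\varepsilon$. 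Both composites are therefore algebra maps $(T^{fc}(A),\qdot)\to(W\otimes W,\qdot')$, so by freeness it suffices to check agreement on generators $v\in A$; both send $v$ to $f(v)\otimes 1+1\otimes f(v)$, using $\Delta_{T^{fc}(A)}(v)=v\otimes 1+1\otimes v$ and the primitivity of $f(v)\in\P(W)$. The analogous freeness argument applied with target $\K$ shows $\varepsilon_W\circ F=\varepsilon_{T^{fc}(A)}$. Naturality in $A$ and in $W$ is then routine, and the unit of the adjunction is the identification $A\cong\P(T^{fc}(A))$ already noted in Example~\ref{ex:fund-inf-bialg}, since the primitives of the cofree conilpotent coalgebra $T^c(A)$ are exactly the degree-one tensors.
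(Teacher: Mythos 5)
Your proof is correct, and it reaches the same bijection as the paper but by a different mechanism at the key step. Both arguments construct $F$ identically, as the free-algebra extension of $A\to\P(W)\subseteq W$, and both handle the converse direction the same way (bialgebra maps preserve primitives, and $\P(T^{fc}(A))=A$). Where you diverge is in showing that the algebra map $F$ is automatically a coalgebra map: the paper invokes Lemma~\ref{lem:inf-delta-prim}(1), which computes $\Delta_W(fv_1\qdot\cdots\qdot fv_k)$ explicitly as the deconcatenation-type sum $\sum_i fv_1\qdot\cdots\qdot fv_i\otimes fv_{i+1}\qdot\cdots\qdot fv_k$ and compares it directly with $(F\otimes F)$ applied to the deconcatenation comultiplication on $T^{fc}(A)$. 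You instead stay at the level of universal properties: by Corollary~\ref{cor:inf-alt}(3) both $\Delta_W\circ F$ and $(F\otimes F)\circ\Delta$ are algebra maps $(T^{fc}(A),\qdot)\to(W\otimes W,\qdot')$, they agree on the generators $v\in A$, and freeness finishes the argument. The two routes are close relatives --- the paper's lemma is itself proved by an induction that encodes essentially your freeness argument --- but yours avoids all element computations at the modest cost of verifying separately that $F\otimes F$ respects $\qdot'$ and that $F$ is counital; note that the counitality check should logically precede the $\qdot'$ verification, since the formula for $\qdot'$ involves $\varepsilon$. The paper's route is shorter given that Lemma~\ref{lem:inf-delta-prim} is already established and is reused elsewhere (e.g.\ in Example~\ref{ex:fund-inf-bialg} and Theorem~\ref{thm:milnor-moore}).
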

\begin{proof}
  Define $F$ as the algebra homomorphism extending $A\stackrel f\to \P(W)\stackrel\subseteq\to W$ to the free algebra,
  \[
F(v_1\qdot\cdots\qdot v_{k})=fv_1\qdot\cdots\qdot fv_{k}.
  \]
 This is a bialgebra homomorphism by Lemma \ref{lem:inf-delta-prim} (1). Conversely, restricting $F$ to $A$ defines the linear map $f$, since $F$ sends primitives to primitives.
\end{proof}

For the counit, consider $f_{W}=\id\colon \P(W)\stackrel=\to\P(W)$ and its extension
\begin{equation}\label{F}
F_{W}\colon  T^{fc}(\P(W))
  \to W.
\end{equation}
\begin{prop}\label{prop:LR-inf-class}
The natural homomorphisms $F_W$ are injective and their image is the conilpotent radical $R(W)$. In particular
  any conilpotent unital infinitesimal bialgebra is naturally isomorphic to the fundamental infinitesimal bialgebra on its primitives.
\end{prop}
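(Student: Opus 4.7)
My plan is to verify the three assertions---that $F_W$ is a homomorphism whose image lies in $R(W)$, that $F_W$ is injective, and that the image is all of $R(W)$---separately, all by induction on the primitive filtration. Throughout I use that $F_W$ is a unital infinitesimal bialgebra homomorphism (hence in particular a coalgebra map) which restricts to the identity on $\P(W)$, both of which follow from the previous proposition applied to $f_W = \id$.

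For the inclusion $\im F_W \subseteq R(W)$, I would iterate Lemma \ref{lem:inf-delta-prim}(1): for primitives $v_1,\dots,v_k$, $\overbar\Delta(v_1\qdot\cdots\qdot v_k)$ is a sum of tensors of strictly shorter products of primitives, so after $k$ iterations $\overbar\Delta^{(k)}(v_1\qdot\cdots\qdot v_k) = 0$ and the element lies in $F_k W$. For injectivity, I would prove by induction on $r$ that $F_W$ is injective on $F_r T^{fc}(\P(W))$: the cases $r\leq 1$ are immediate since $F_W$ is the identity on $\K 1\oplus \P(W)$, and for the inductive step, if $F_W(x) = 0$ with $x\in F_r$ then the coalgebra map property gives $(F_W\otimes F_W)(\overbar\Delta x) = \overbar\Delta(F_W x) = 0$; since $\overbar\Delta(x)\in F_{r-1}^{\otimes 2}$ and tensor products of injective maps over a field are injective, the inductive hypothesis forces $\overbar\Delta(x) = 0$, reducing us to the primitive case.

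The main obstacle is showing $R(W)\subseteq \im F_W$. I would again induct on $r$, starting from $x\in F_r\overbar W$. The crucial observation is that the vanishing $\overbar\Delta^{(r)}(x) = 0$ combined with the equalities $\overbar\Delta^{(r)} = (\id^{\otimes j}\otimes\overbar\Delta\otimes\id^{\otimes(r-1-j)})\overbar\Delta^{(r-1)}$ valid for every $0\leq j\leq r-1$ forces each tensor factor of $\overbar\Delta^{(r-1)}(x)\in \overbar W^{\otimes r}$ to lie in $\ker\overbar\Delta = \P(W)$. Writing $\overbar\Delta^{(r-1)}(x) = \sum_i v_1^i\otimes\cdots\otimes v_r^i$ with each $v_j^i$ primitive, I set $y = \sum_i v_1^i\qdot\cdots\qdot v_r^i$, which lies in $\im F_W$ by construction. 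Another application of Lemma \ref{lem:inf-delta-prim}(1) shows $\overbar\Delta^{(r-1)}(y) = \sum_i v_1^i\otimes\cdots\otimes v_r^i = \overbar\Delta^{(r-1)}(x)$, so $x - y \in F_{r-1}\overbar W$, and by induction $x - y\in \im F_W$, giving $x \in \im F_W$. This identification is the heart of the matter: it mirrors the fact that in the tensor algebra the iterated deconcatenation of a product of generators is just the tensor of those generators. The final assertion is then immediate, since a conilpotent $W$ satisfies $W = R(W)$, so $F_W$ becomes an isomorphism onto $W$.
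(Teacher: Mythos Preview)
Your argument is correct and self-contained. The paper's own proof is much terser: it first observes that $F_W$ has conilpotent image (so factors through $R(W)$) and then simply cites \cite[Theorem~2.6]{LodayRonco2006} for the fact that $T^{fc}(\P(R(W)))\to R(W)$ is an isomorphism when the target is conilpotent. You have instead given a direct proof of that cited theorem, via induction on the primitive filtration: injectivity from the coalgebra-map property and the identity on primitives, surjectivity by showing $\overbar\Delta^{(r-1)}(x)\in\P(W)^{\otimes r}$ for $x\in F_rW$ and then correcting $x$ by the corresponding $\qdot$-product. This is the standard elementary route (and is essentially what underlies the Loday--Ronco result), so the content is the same; what your approach buys is that the proof is self-contained within the paper, at the cost of length. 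One small point of phrasing: rather than saying each ``tensor factor'' of $\overbar\Delta^{(r-1)}(x)$ lies in $\P(W)$, it is cleaner to say $\overbar\Delta^{(r-1)}(x)\in\P(W)^{\otimes r}$, which follows because over a field $\bigcap_j\ker(\id^{\otimes j}\otimes\overbar\Delta\otimes\id^{\otimes(r-1-j)})=(\ker\overbar\Delta)^{\otimes r}$, as in the proof of the lemma characterising $F_rC$.
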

\begin{proof}
  Clearly $F_{W}$ has conilpotent image and so factors
  \[T^{fc}(\P(W))=T^{fc}(\P(R(W)))
    \to R(W)\subseteq W.\]
  Now observe that
  $F\colon T^{fc}(\P(R(W)))
    \to R(W)$ is an isomorphism, by \cite[Theorem 2.6]{LodayRonco2006}.
  \end{proof}
  In other words:
\begin{corollary}\label{cor:corefl}
The functor $T^{fc}$ embeds the category of vector spaces as
the full coreflective subcategory of unital infinitesimal bialgebras whose objects are conilpotent
or, equivalently, cofree.
\end{corollary}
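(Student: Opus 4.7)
The plan is to decompose the corollary into four sub-claims: (i) $T^{fc}$ is fully faithful; (ii) its essential image is exactly the class of conilpotent unital infinitesimal bialgebras; (iii) in this setting \emph{conilpotent} is equivalent to \emph{cofree}; and (iv) the inclusion of this full subcategory into all unital infinitesimal bialgebras admits a right adjoint.

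For (i), I would invoke the adjunction $T^{fc}\dashv \P$ from the preceding proposition: for vector spaces $A,B$ one has $\hom(T^{fc}A,T^{fc}B)\cong\hom(A,\P T^{fc}B)=\hom(A,B)$ via the unit isomorphism $B\cong\P T^{fc}(B)$. For (ii) and (iii), I would appeal directly to Proposition \ref{prop:LR-inf-class}: the map $F_W\colon T^{fc}(\P W)\to W$ is an isomorphism precisely when $W$ is conilpotent, which is exactly the condition that $W$ lie in the essential image of $T^{fc}$. Such a $W$ then has underlying coalgebra $T^c(\P W)$ and is therefore cofree; conversely a cofree unital infinitesimal bialgebra has underlying coalgebra a tensor coalgebra, hence is conilpotent (Section \ref{sub:tensorcoalg}).

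For (iv), the expected right adjoint is the conilpotent radical $W\mapsto R(W)$, equivalently $W\mapsto T^{fc}(\P W)$ by the previous results. To establish the natural bijection $\hom_{\mathrm{bialg}}(iC,W)\cong\hom_{\mathrm{conil}}(C,R(W))$ for conilpotent $C$ and arbitrary $W$, the key observation is that any unital infinitesimal bialgebra homomorphism $f\colon C\to W$ sends $C$ into $R(W)$: since $f$ is a coalgebra map it commutes with the iterated reduced comultiplications, so $\overbar\Delta_W^{(r)}(f(x))=f^{\otimes (r+1)}(\overbar\Delta_C^{(r)}(x))$ vanishes for $r$ sufficiently large depending on $x\in\overbar C$. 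Combined with Lemma \ref{lem:inf-delta-prim}(2), which ensures that $R(W)$ is itself a unital infinitesimal subbialgebra of $W$, the restriction defines the required natural bijection.

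I do not foresee any serious obstacle: Proposition \ref{prop:LR-inf-class} and Lemma \ref{lem:inf-delta-prim}(2) already do the heavy lifting, and the corollary is essentially a categorical repackaging of those results. The only point requiring a moment's care is the factorisation argument in (iv), namely verifying that any morphism out of a conilpotent object lands in the conilpotent radical of the target; everything else is formal.
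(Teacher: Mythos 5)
Your proposal is correct and follows essentially the route the paper intends: the paper offers no explicit proof, introducing the corollary with ``In other words'' as a categorical repackaging of Proposition~\ref{prop:LR-inf-class} (together with Lemma~\ref{lem:inf-delta-prim}(2) and the $T^{fc}\dashv\P$ adjunction), which is precisely the machinery you deploy. Your only genuine addition is spelling out the coreflectivity check in (iv) --- that a morphism out of a conilpotent object factors through the conilpotent radical of the target --- and that verification is correct.
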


\section{\ainfty algebras, multibrace algebras and \binfty algebras}

\subsection{\ainfty algebras}
The notion of $A_\infty$ algebra is encoded in differential graded structures on the tensor coalgebra.

Consider the tensor coalgebra $T^c(V)$ on a graded vector space $V$ as a bicomodule over itself.

Any coderivation $d\colon T^c(V)\to T^c(V)$, and each the projections $d_k=p_kd\colon T^c(V)\to V^{\otimes k}$,
are determined by the projection to the primitives $d_1=p_1d\colon T^c(V)\to V$.  Explicitly, if we denote
\[d_1=\left(m_n\colon V^{\otimes n}\to V\right)_{n\geq 0},\qquad\qquad
d_k=\left(m^{k}_n\colon V^{\otimes n}\to V^{\otimes k}\right)_{n\geq0},
\]
then by the cofreeness property of Remark~\ref{rk:tensorcoalg} (c),
\begin{equation}\label{eq:mnk}
m^k_n=\sum_{\myclap{\substack{i,j\geq0\\i+1+j=k}}}\id_V^{\otimes i}\otimes m_{n-i-j}\otimes \id_V^{\otimes j}
\colon V^{\otimes n}\to V^{\otimes k}.
\end{equation}
A coderivation $d$ on $T^c(V)$ is termed a \emph{differential} if it has degree $-1$ and satisfies $d(1)=0$ and $d^{2}=0$. The condition $d(1)=0$ says $m_0=0$ and $m_n^k=0$ for $k>n$; we may regard a differential as a map
\begin{gather}
  d\colon\overbar T^{c}(V)\to \overbar T^{c}(V)\intertext{ determined by }
  d_1=p_{1}d=\sum_{n\geq 1}m_n\colon \overbar T^{c}(V)\to V.
\end{gather}
The condition $d^2=0$ therefore says
\begin{equation}\label{eq:a-infty}
  \qquad\qquad\sum_{\substack{i,j\ge0\\i+j<n}}m_{i+1+j}\left(\id_{V^{\otimes i}}\otimes m_{n-i-j}\otimes \id_{V^{\otimes j}}\right)=0\colon V^{\otimes n}\to V\qquad (n\geq 1).
\end{equation}
As the maps $m_{n}$ have degree $-1$ we note that the usual
Koszul signs appear in the summations of \eqref{eq:mnk}  or \eqref{eq:a-infty} when they are applied to elements.

\begin{definition}[$A_\infty$ algebra]
  An $A_\infty$ algebra is a graded vector space $V$ together with a differential  $d$ on $T^c(V)$, or equivalently a graded vector space $V$ with a sequence  $d_1=(m_1,m_2,m_3,\dots)$ of degree $-1$ maps $m_n\colon V^{\otimes n}\to V$ satisfying the relations \eqref{eq:a-infty}.
\end{definition}
\begin{remark}\label{rk:shiftA}
This notion might more properly be called a \emph{shifted $A_{\infty}$ algebra}: an $A_\infty$ algebra is classically defined via a differential on the bar construction rather than on the tensor coalgebra. The definitions are essentially equivalent and we adopt the latter as it simplifies degrees and signs.
Let $V[1]=\K[1]\otimes V$, where $\K[1]$ is a copy of the field $\K$ concentrated in degree 1, and let  $s\colon V\to V[1]$, $sv=1\otimes v$,
  be the canonical degree 1 suspension isomorphism.
  Then a classical $A_\infty$ structure on $V$ corresponds to a shifted $A_\infty$ structure on $V[1]$,
  and $V$ is thus equipped with a sequence of degree $n-2$ maps $s^{-1}m_{n}s^{\otimes n}\colon V^{\otimes n}\to V$
satisfying relations analogous to \eqref{eq:a-infty} but with additional Koszul signs that arise from rearranging the intervening suspension maps.
\end{remark}
\begin{example}[Trivial $A_\infty$ structure]\label{ex:trivAinf}
  If $V$ comes equipped with an endomorphism $\partial\colon V\to V$ of degree $-1$ then the map
  \[d_1=(\partial,0,0,\dots)=\partial p_1\colon T^c(V)\longrightarrow V\]
  extends to a unique coderivation $d=\sum d_k$ on $T^c(V)$ of degree $-1$. The extension satisfies $d^2=0$ if and only if $\partial^2=0$, that is, $(V,\partial)$ is a chain complex.
  The differential on $T^c(V)$ in this case is the \emph{trivial $A_\infty$ structure on $V$} and is given explicitly by the structure maps
  \[
    m^k_n=0 \;\;\; (k\neq n),\qquad
    m^k_k=\sum_{i=0}^{k-1}\id_V^{\otimes i}\otimes \partial\otimes \id_V^{\otimes k-i-1}
    \colon V^{\otimes k}\to V^{\otimes k}.
  \]

\end{example}

\subsection{Multibrace algebras}
The notion of multibrace algebra arises from bialgebra structures on the tensor coalgebra.
If $V$ is a graded vector space then a bialgebra structure on the tensor coalgebra $(T^c(V),1,\Delta,\varepsilon)$ is a 
coalgebra map $\mu\colon T^c(V)\otimes T^c(V)\to T^c(V)$ of degree zero satisfying the associative and unit laws.

As the tensor coalgebra is cofree, the coalgebra map $\mu$ and each of its projections
$\mu_r=p_r\mu\colon T^c(V)\otimes T^c(V)\to V^{\otimes r}$ are determined by $\mu_{1}$. Let us denote
their components by\[
\mu_{1}=\left(m_{i,j}    \colon V^{\otimes i}\otimes V^{\otimes j}\to V   \right)_{i,j\ge0}
,\qquad
\mu_{r}=\left(m_{i,j}^{r}\colon V^{\otimes i}\otimes V^{\otimes j}\to V^{\otimes r}    \right)_{i,j\ge0}
.\]

The unit law for the multiplication $\mu$ says that it is determined by its restriction to the reduced coalgebra, see Remark \ref{rk:unit-law}, and hence by the components $m_{i,j}$ with $i,j\geq 1$,
\begin{equation}
  \begin{tikzcd}
    \overbar T^{c}(V)\otimes \overbar T^{c}(V)\ar[r]
    \ar[rd,"{\sum\limits_{\!\!\!i,j\geq 1\!\!\!} m_{i,j}\!\!\!\!\!}"']
    &\ar[d,"p_{1}"] T^{c}(V)
\\&V,\end{tikzcd}
\end{equation}
as the components with $i$ or $j$ zero are just projections to $V$ of the identifications $\K\otimes T^c(V)= T^c(V)$ and  $T^c(V)\otimes\K= T^c(V)$ respectively, cf. \eqref{Ktrivial},
  \begin{equation}\label{m0n}
    \left\{\begin{array}{c}
      m_{0,1}=
      \id
      \colon\K\otimes V^{\phantom{\otimes n}}\stackrel=\longrightarrow V,
      \\
      m_{0,n}=\,0\,\colon\K\otimes V^{\otimes n}\longrightarrow V,
          \end{array}\right.
\qquad
\left\{\begin{array}{cl}
  m_{1,0}=
  \id
  \colon V^{\phantom {\otimes n}}\otimes \K\stackrel=\longrightarrow V.&\\
m_{n,0}=\,0\,\colon V^{\otimes n}\otimes\K\longrightarrow V,&
                     \qquad(n\neq1).
   \end{array}\right.
\end{equation}

The associativity law for the multiplication $\mu$ is equivalent to $\mu_1(\mu\otimes\id)=\mu_1(\id\otimes\mu)$. To make this explicit we introduce some auxiliary notation.
\begin{notation}
  Denote by $\compositions ir\subset \mathbb N^{r}$ the set of sequences $\underline i=(i_1,\dots,i_r)$ of $r$ nonnegative integers whose sum is $i$.
  Now each iterated comultiplication $\Delta^{(r-1)}$ 
  on $T^c(V)\otimes T^c(V)$ has as components
\begin{gather*}
\Delta_{r}^{\underline i,\underline j}\colon V^{\otimes i}\!\otimes\! V^{\otimes j}=
(V^{\otimes i_1}\!\otimes\! \cdots\!\otimes\! V^{\otimes i_r}
)\otimes(
V^{\otimes j_1}\!\otimes\!\cdots\! \otimes\! V^{\otimes j_r})
\stackrel\sigma\longrightarrow
(V^{\otimes i_1}\!\otimes\! V^{\otimes j_1})\otimes\cdots\otimes (V^{\otimes i_r}\!\otimes \!V^{\otimes j_r})
\end{gather*}
for all $i,j\geq0$ and
$(\underline i,\underline j) \in \twocompositions ijr$.
When $\Delta_{r}^{\underline i,\underline j}$ is applied to elements the Koszul signs   \eqref{eq:koszul-sgn} appear.
\end{notation}

By Remark \ref{rk:tensorcoalg} (b) we have $\mu_r=p_r\mu=\mu_1^{\otimes r}\Delta^{(r-1)}\colon T^c(V)\otimes T^c(V)\to V^{\otimes r}$, and so its components can be written as
  \begin{equation}\label{eq:mu-r}
    m_{i,j}^{r}\;\;=\;\;\sum_{(\underline i,\underline j)\in   \twocompositions ijr}
    (m_{i_1,j_1}\otimes
    \cdots\otimes
    m_{i_r,j_r})\Delta_{r}^{\underline i,\underline j}\colon V^{\otimes i}\otimes V^{\otimes j}\to V^{\otimes r}
    .  \end{equation}
The components $m_{i,j}^{i+j}$ coincide with the shuffle product \eqref{eq:sha},  and  $m_{i,j}^r=0$ if $r> i+j$.

We can now express the associativity of $\mu$ in terms of the $m_{i,j}$: for each $i,j,k\ge 1$ there is an equality
\begin{align}\label{eq:Rijk}
    \sum_{\myclap{\substack{r\geq1\\ (\underline i, \underline j)\in   \twocompositions ijr}}}
    m_{r,k}&\left((m_{i_1,j_1}\otimes
    \cdots\otimes
    m_{i_r,j_r})\Delta_{r}^{\underline i,\underline j}\,\otimes\,\id_V^{\otimes k}\right)
  \\[-1em]&\quad=\;
      \sum_{\myclap{\substack{s\geq1\\ (\underline j,\underline k)\in   \twocompositions jks}}}
    m_{i,s}\left(\id_V^{\otimes i}\,\otimes\,(m_{j_1,k_1}\otimes
    \cdots\otimes
    m_{j_s,k_s})\Delta_{s}^{\underline j,\underline k}
  \right)\nonumber
\end{align}
of linear maps $    V^{\otimes i}\otimes V^{\otimes j}\otimes V^{\otimes k}\to V$, where terms $m_{n,0}$ and $m_{0,n}$ are given by \eqref{m0n}.

\begin{definition}[Multibrace algebra]
  A multibrace algebra is a graded vector space $V$ with a coalgebra map $\mu\colon T^c(V)\otimes T^c(V)\to T^c(V)$ satisfying the associative and unit laws.

  Equivalently, it is a graded vector space $V$ endowed with a family of degree zero multilinear maps $m_{i,j}\colon V^{\otimes i}\otimes V^{\otimes j}\to V$, $i,j\geq 1$ 
  such that the relations \eqref{eq:Rijk} hold for $i,j,k\ge1$.
\end{definition}
The definition of multibrace algebra in the ungraded world was given in \cite[Definition 1.5]{LodayRonco2006}, where it was termed (non-differential) $\mathbf B_{\infty}$ algebra.
As Koszul signs do not appear in the ungraded context the symmetry isomorphisms $\sigma$ and the maps $\Delta_{r}^{\underline i,\underline j}$ involved in the relations \eqref{eq:Rijk}, there termed $R_{ijk}$, were left implicit.
\begin{example}[Trivial multibrace algebra]\label{ex:trivMB}
  Consider a multibrace structure $\mu$ on $V$ in which all components $m_{i,j}$ of $\mu_{1}$  (except $m_{0,1}$ and $m_{1,0}$) are zero. Then $\mu_{1}$ is the linear map $\sha_1$ considered in Example \ref{ex:sha} \eqref{eq:sha1},
  and \eqref{eq:sha} gives the multiplication $\mu=\sha$.
    That is, we can identify the trivial multibrace algebra $V$ with the shuffle bialgebra $T^{\sha}(V)$.
\end{example}

\begin{example}[Quasi-shuffles]\label{ex:stuf}
  If $V$ carries a binary operation $\pdot\colon V\otimes V\to V$
  then we can upgrade
  the shuffle bialgebra and replace \eqref{eq:sha1} by
\begin{equation}
  \osha_1
  ={\sha_1}+{p_{1}\pdot p_{1}}
    \colon  \overbar{T^c(V)\otimes  T^c(V)}
  \to
  \K\otimes V\;\oplus\; V\otimes \K \;\oplus\; V\otimes V
  \to
  V
\end{equation}
This determines a unique coalgebra map
$\osha\colon T^c(V)\otimes T^c(V)\to T^c(V)$
which satisfies the unit law, and which is associative if and only if the binary operation $\pdot$ was.
Thus we have a multibrace algebra in which $m_{1,1}=\pdot$ and all other operations $m_{i,j}$ except $m_{0,1}$ and $m_{1,0}$ are zero. If the multiplication $\pdot$ is zero we recover the previous example.

If the operation $\pdot$ is associative and graded commutative then so is $\osha$ and this multibrace algebra is a
\emph{quasi-shuffle} (or \emph{stuffle}) algebra, compare \cite{loday-stuf}\footnote{One should interpret \cite[Proposition 1.3]{loday-stuf} and its proof with care; the diagram in the proof
  does not commute on elements of the form $a\otimes 1_\K$ for example.}.

Suppose the multiplication $\pdot$ is unital, with unit $\eta(1_\K)=1\in V$.
If $\widetilde{p_{1}}$ is the unital linear map
\begin{equation}\label{eq:unital p1}
  \widetilde{p_{1}}
  =\eta\varepsilon+p_{1}\colon T^c(V)
  \to V
\end{equation}
then $\osha_{1}$ is just the restriction to
$\overbar{T^c(V)\otimes  T^c(V)}$ of
\begin{equation}
\widetilde{p_{1}}\pdot\widetilde{p_{1}}  \colon
T^c(V)\otimes  T^c(V)\to V\otimes V\to V.
\end{equation}
\end{example}
\begin{example}\label{ex:lr-mb}
  If $V$ is endowed with two
  associative binary operations $\pdot$ and $\qdot$,
unrelated except that they share a common unit,
   it turns out that $V$ has a unique multibrace structure $\mu_{\pdot}^{\qdot}$  whose components $m^{r}_{i,j}\colon V^{\otimes i}\otimes V^{\otimes j}\to V^{\otimes r}$ satisfy
  \begin{equation}\label{eq:mu-r-lift}
    \sum_{r\ge1}\qdot^{(r-1)}m^r_{i,j}
    \;=\;
    \pdot(\qdot^{(i-1)}\otimes\qdot^{(j-1)})
    \colon V^{\otimes i}\otimes  V^{\otimes j}\longrightarrow  V.
  \end{equation}
  It is sufficient to specify the components $m_{i,j}=m^{1}_{i,j}$ which must satisfy $m_{0,0}=0$ and,
  by \eqref{eq:mu-r},
\begin{equation}\label{eq:mij}
    m_{i,j}\;\;
=\;\;    \pdot(\qdot^{(i-1)}\otimes\qdot^{(j-1)})\;\;-\;\;\sum_{r\ge2}\qdot^{(r-1)}\sum_{ (\underline i, \underline j)\in   \twocompositions ijr}
    (m_{i_1,j_1}\otimes
 \cdots   \otimes
    m_{i_r,j_r})\Delta_{r}^{\underline i,\underline j}
\end{equation}
This recursive formula to define maps $m_{i,j}$ 
was studied
in the ungraded situation by Loday and Ronco
\cite[Section 3.3 and Proposition 3.4]{LodayRonco2006}.
In Theorem \ref{thm:lr-mb} we will give a simple argument to show that the operations $(m_{i,j})$ so defined do indeed satisfy the multibrace axioms \eqref{eq:Rijk}.
\end{example}

\subsection{\binfty algebras}
The notion of $B_\infty$ algebra is encoded in differential graded bialgebra structures on the tensor coalgebra.
If $V$ is a graded vector space then a differential graded bialgebra structure on $(T^c(V),1,\Delta,\varepsilon)$ is a differential $d\colon T^c(V)\to T^c(V)$ together with a coalgebra map $\mu\colon T^c(V)\otimes T^c(V)\to T^c(V)$ with such that that $(T^c(V),\mu,d,1)$ is a differential graded algebra. That is, a $B_\infty$ structure is an $A_\infty$ structure $d$ together with a compatible multibrace structure $\mu$.

If $d$ and $\mu$ are given by families of multilinear maps $(m_n)$ and $(m_{i,j})$ as above then the compatibility property is equivalent to
$d_1\mu=\mu_1(d\otimes\id+\id\otimes d)$, that is, for each $i,j\geq1$ there is an equality
\begin{align}\label{eq:Dij}
  \sum_{
     \myclap{
        \substack{r\geq1\\(\underline i,\underline j)\in\twocompositions ijr}
     }
  }
  m_r(m_{i_1,j_1}\otimes
    \cdots\otimes
    m_{i_r,j_r})\Delta_{r}^{\underline i,\underline j}
    \;&=\;
  \sum_{\myclap{\substack{p,q\ge0\\p+q<i}}}
  m_{p+1+q,j}\left((\id_V^{\otimes p}\otimes m_{i-p-q}\otimes\id_V^{\otimes q})\otimes\id_V^{\otimes j}\right)
  \\[-1ex]&+\;    \sum_{\myclap{\substack{p,q\ge0\\p+q<j}}}
    m_{i,p+1+q}\left(\id_V^{\otimes i}\otimes(\id_V^{\otimes p}\otimes m_{j-p-q}\otimes\id_V^{\otimes q})\right)
  \nonumber
      \end{align}
between linear maps $    V^{\otimes i}\otimes V^{\otimes j}
\to V$.
If one evaluates these, to write the relations as equalities between elements, then Koszul signs appear.
\begin{definition}
  A $B_\infty$ structure on a graded vector space $V$ is given by
a coalgebra map $\mu\colon T^c(V)\otimes T^c(V)\to T^c(V)$ satisfying the associative and unit laws,
together with a differential $d$ on the coalgebra $T^c(V)$ which is a derivation with respect to $\mu$. Alternatively, it is given by collections of multilinear maps
 $\left(m_{i,j}\colon V^{\otimes i}\otimes V^{\otimes j}\to V\right)_{i,j\geq1}$ of degree $0$ and
 $\left(m_n\colon V^{\otimes n}\to V\right)_{n\geq1}$  of degree $-1$ that  satisfy the families of relations
 \eqref{eq:a-infty}, 
 \eqref{eq:Rijk} and \eqref{eq:Dij}.
\end{definition}
\begin{remark}\label{rk:shiftB}
This notion might more properly be called a \emph{shifted} $B_\infty$ algebra.  The notion of $B_\infty$ algebra is usually defined via a differential bialgebra structure on the bar construction of a graded space, rather than on the tensor coalgebra as we have done here. As mentioned in Remark \ref{rk:shiftA} above, this leads to differences in degrees and signs: a $B_\infty$ structure on $V$ corresponds to a shifted $B_\infty$ structure on $V[1]$ and is therefore equipped with degree $n-2$ operations $s^{-1}m_ns^{\otimes n}\colon V^{\otimes n}\to V$
  and degree $i+j-1$ operations $s^{-1}m_{i,j}(s^{\otimes i}\otimes s^{\otimes j})\colon V^{\otimes i}\otimes V^{\otimes j}\to V$,
  satisfying relations analogous to \eqref{eq:a-infty},  
  \eqref{eq:Rijk} and \eqref{eq:Dij}, but with some additional Koszul signs arising from commuting the operations with the suspension maps $s$.
\end{remark}

The non-commutative quasi-shuffle algebra of Example \ref{ex:stuf}  can be combined with
the trivial $A_\infty$ algebra of Example \ref{ex:trivAinf} as follows.
\begin{prop}[Quasi-trivial $B_\infty$ algebras]\label{prop:difstuf}
  For any differential graded algebra
  $(V,\pdot,\partial)$ there is a
$B_\infty$ structure $\mu_{\pdot},d_{\partial}$ on $V$ uniquely defined by
\[
\begin{tikzcd}
  \overbar T^c(V)\otimes \overbar T^c(V)\ar[d,
"p_{1}\otimes p_{1}"'
  ]\ar[r,"\mu_{\pdot}"]&\overbar T^c(V)\ar[d,"p_{1}"]\\
  V\!\otimes\!V\ar[r,"\pdot"]
  &V,
\end{tikzcd}\qquad
\qquad
\begin{tikzcd}
  \overbar T^c(V)\ar[d,"p_{1}"']\ar[r,"d_{\partial}"]&\ar[d,"p_{1}"]\overbar T^c(V)\\
  V\ar[r,"\partial"]&V.
\end{tikzcd}
\]
All operations $m_{i,j}$ and $m_{n}$ are zero, except  $m_{1,1}=\pdot$, $m_1=\partial$, and
$m_{0,1}=m_{1,0}=\id$, see
\eqref{Ktrivial}, \eqref{m0n}.
\end{prop}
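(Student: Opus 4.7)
The plan is to construct and verify the three ingredients separately: the multibrace structure $\mu_{\pdot}$, the $A_{\infty}$ differential $d_{\partial}$, and their compatibility. The first two are already essentially done in Examples~\ref{ex:stuf} and~\ref{ex:trivAinf}. Namely, the associative operation $\pdot$ on $V$ determines a quasi-shuffle multibrace structure $\mu_{\pdot}$: by cofreeness of $T^c(V)$ it is the unique coalgebra map extending $\osha_{1}=\sha_{1}+p_{1}\pdot p_{1}$, and the associativity and unit laws follow from those of $\pdot$ exactly as in Example~\ref{ex:stuf}. All components $m_{i,j}$ vanish except those listed. Similarly, since $\partial^{2}=0$, Example~\ref{ex:trivAinf} produces the trivial $A_{\infty}$ structure $d_{\partial}$ on $V$, with all components $m_{n}$ vanishing except $m_{1}=\partial$.

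It remains to verify that $d_{\partial}$ is a $\mu_{\pdot}$-derivation, i.e.\
\[
d_{\partial}\,\mu_{\pdot}\;=\;\mu_{\pdot}\,(d_{\partial}\otimes\id+\id\otimes d_{\partial})
\qquad\text{on }T^{c}(V)\otimes T^{c}(V).
\]
Both sides are coderivations from the $T^{c}(V)$-bicomodule $T^{c}(V)\otimes T^{c}(V)$ (with bicomodule structure pulled back from $T^{c}(V)$ along the coalgebra map $\mu_{\pdot}$) to $T^{c}(V)$, the left-hand side because $d_{\partial}$ is, and the right-hand side because $d_{\partial}\otimes\id+\id\otimes d_{\partial}$ is a coderivation on the tensor coalgebra $T^{c}(V)\otimes T^{c}(V)$ and $\mu_{\pdot}$ is a coalgebra map. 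By the cofreeness property of Remark~\ref{rk:tensorcoalg}(c) it therefore suffices to check that their projections $p_{1}\circ(\cdot)$ to $V$ agree, which amounts to the single identity $\partial\,\osha_{1}=\osha_{1}\,(d_{\partial}\otimes\id+\id\otimes d_{\partial})$.

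This identity is then verified by inspection on a generator $v_{1}\cdots v_{i}\otimes w_{1}\cdots w_{j}$. Since $d_{\partial}$ preserves tensor length and $\osha_{1}$ is non-zero only when its two arguments have lengths summing to $1$ or when both have length exactly $1$, both sides vanish unless $(i,j)\in\{(0,1),(1,0),(1,1)\}$. In the first two cases both sides equal $\partial v$ or $\partial w$, and in the remaining case $(i,j)=(1,1)$ the equation reduces precisely to the Leibniz rule
\[
\partial(v\pdot w)\;=\;\partial v\pdot w+(-1)^{|v|}v\pdot\partial w,
\]
which holds by assumption that $(V,\pdot,\partial)$ is a differential graded algebra. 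The Koszul signs on the right-hand side arise from the standard rule $(\id\otimes d_{\partial})(v\otimes w)=(-1)^{|v|}v\otimes d_{\partial}w$. There is no real obstacle; the only point requiring care is keeping track of the unit-law components \eqref{m0n} so that the boundary cases $i=0$ or $j=0$ are handled correctly, but once these are recorded the case analysis is immediate.
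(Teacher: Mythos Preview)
Your proof is correct and follows essentially the same strategy as the paper: invoke Examples~\ref{ex:trivAinf} and~\ref{ex:stuf} for the $A_\infty$ and multibrace pieces, then use cofreeness (Remark~\ref{rk:tensorcoalg}(c)) to reduce the derivation compatibility to its $p_1$-projection, which is exactly the Leibniz rule for $(\pdot,\partial)$. The paper phrases the last step slightly more compactly---observing that the single map $\mu_{\pdot}(d_\partial\otimes\id+\id\otimes d_\partial)-d_\partial\mu_{\pdot}$ is a coderivation with $p_1$-projection $\partial p_1\pdot p_1+p_1\pdot\partial p_1-\partial(p_1\pdot p_1)$---while you check both sides separately and then do a short case analysis on $(i,j)$, but the content is the same.
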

\begin{proof}
The linear maps 
  \begin{align*}
   d_1=\partial p_1\colon\, \overbar T^c(V)\,\longrightarrow &\,V,
  \\    \mu_1=p_1\pdot p_1
    \colon
    \overbar T^c(V)^{\otimes 2}
    \to&\, V,
\end{align*}
considered in Examples  \ref{ex:trivAinf}  and \ref{ex:stuf} determine  respectively
     a coderivation $d_{\partial}$ on $T^c(V)$
and a coalgebra map $\mu_{\pdot}=\osha\colon T^c(V)^{\otimes 2}\to T^c(V)$ satisfying the unit law. The  $B_\infty$ algebra axioms follow by uniqueness of extensions:
  \begin{enumerate}
    \item The binary operation $\mu_{\pdot}$ is associative if and only if $\pdot$ is associative, since $\mu_{\pdot}(\mu_{\pdot}\otimes\id)$ and $\mu_{\pdot}(\id\otimes\mu_{\pdot})$ are coalgebra maps whose projections to $V$ are
          $(p_1\pdot p_1)\pdot p_1$ and $p_1\pdot (p_1\pdot p_1)$.
     \item $d_{\partial}^{2}=0$ if and only if $\partial^2=0$, since $d_{\partial}^2$ is a coderivation satisfying $p_1d_{\partial}^2=\partial p_{1}d_{\partial}=\partial^2p_1$.
    \item $d_{\partial}$ is a derivation of $\mu_{\pdot}$ if and only if  $\partial$ is a derivation of $\pdot$, since $\mu_{\pdot}(d_{\partial}\otimes\id+\id\otimes d_{\partial})-d_{\partial}\mu_{\pdot}$ is a coderivation whose projection to $V$ is
          $\partial p_1\pdot p_1+p_1\pdot\partial p_1-\partial(p_1\pdot p_1)$.
  \end{enumerate}
  Thus given a differential graded algebra $(V,\pdot,\partial)$ we have a $B_{\infty}$ algebra $(V,\mu_\pdot,d_\partial)$.
\end{proof}

\subsection{Twistings}\label{sec:twist}

By a \emph{twisting} $\tau$ on a graded vector space $V$ we mean a coalgebra automorphism of $T^c(V)$. By cofreeness a twisting $\tau$ is determined by $\tau_1\colon T^{c}(V)\to V$, that is, by a sequence of multilinear maps $t_{n}\colon V^{\otimes n}\to V$, $n\geq1$. For simplicity we will assume the component $t_{1}=\id_{V}$. The inverse $\tau^{-1}$ is determined by $u_{n}\colon  V^{\otimes n}\to V$, $n\ge1$, that can be calculated recursively from $(\tau^{-1})_1\circ \tau=p_1$, that is, $u_{1}=\id_{V}$ and
\[
\sum_{0<r\le n} {u_r\tau_1^{\otimes r}\Delta^{(r-1)}}_{|V^{\otimes n}}\;=\;
u_n+  \sum_{\substack{0<r<n\\[0.3ex] \underline i\in   \overbar{\compositions nr}}}u_r\circ(t_{i_{1}}\otimes\cdots\otimes t_{i_{r}})\;=\;0\colon V^{\otimes n}\to V\qquad(n\ge2)
\]
where  $\overbar{\compositions nr}\subseteq \compositions nr$ is the subset of sequences of strictly positive integers.

The twistings that interest us here will arise from associative binary operations, as follows.
\begin{lemma}\label{lem:twistmult}
Suppose $V$ is a graded vector space equipped with an associative binary operation $\qdot$. Then the sequence of iterated multiplication maps $\left(\qdot^{(n-1)}\colon V^{\otimes n}\to V\right)_{n\geq 1}$ determines a twisting $\tau^{\qdot}$ whose inverse is determined by $\left((-1)^{n-1}\;\qdot^{(n-1)}\colon V^{\otimes n}\to V\right)_{n\geq1}$.

If $T^{c}(V)$ is considered as a free algebra, then the projection $\tau^{\qdot}_1=p_1\tau^{\qdot}\colon \overbar T^c(V)\to V$ is a non-unital algebra homomorphism.
If $V$ has a unit and $\widetilde{p_1}$ is as defined in \eqref{eq:unital p1}, then $\widetilde{p_1}\tau^{\qdot}\colon T^c(V)\to V$ is a unital algebra homomorphism.
\end{lemma}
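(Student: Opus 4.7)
First I would establish invertibility of $\tau^{\qdot}$. By cofreeness (Remark~\ref{rk:tensorcoalg}(b)) the sequence $(\qdot^{(n-1)})_{n\ge 1}$ with first term $\id_{V}$ extends uniquely to a counital coalgebra endomorphism of $T^{c}(V)$. Since only the $r=n$ summand contributes to the action of $\tau^{\qdot}$ on $F_{n}T^{c}(V)/F_{n-1}T^{c}(V)\cong V^{\otimes n}$, and that summand is $t_{1}^{\otimes n}=\id_{V^{\otimes n}}$, the endomorphism is upper triangular with identity diagonal with respect to the primitive filtration, hence automatically invertible. This gives a twisting.

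Next I would verify the explicit formula for the inverse. Substituting the candidate $u_{r}=(-1)^{r-1}\qdot^{(r-1)}$ into the recursive relation $u_{n}+\sum_{0<r<n,\,\underline i\in\overbar{\compositions nr}}u_{r}(t_{i_{1}}\otimes\cdots\otimes t_{i_{r}})=0$ stated just above the Lemma, associativity of $\qdot$ collapses each composite $\qdot^{(r-1)}(\qdot^{(i_{1}-1)}\otimes\cdots\otimes\qdot^{(i_{r}-1)})$ to $\qdot^{(n-1)}$ whenever $i_{1}+\cdots+i_{r}=n$. Since $\overbar{\compositions nr}$ has cardinality $\binom{n-1}{r-1}$, the identity reduces to the binomial sum $\sum_{k=0}^{n-1}(-1)^{k}\binom{n-1}{k}=0$ for $n\ge 2$. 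This is the only step that is not pure bookkeeping and is where I expect the main calculation to occur.

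For the algebra homomorphism claim, note that $\tau^{\qdot}_{1}=\sum_{n\ge 1}\qdot^{(n-1)}p_{n}$ sends $v_{1}\cdots v_{n}\in V^{\otimes n}$ to $v_{1}\qdot\cdots\qdot v_{n}\in V$. For nonempty tensors $x=v_{1}\cdots v_{m}$ and $y=v_{m+1}\cdots v_{m+n}$ in $\overbar T^{c}(V)$, their concatenation in the tensor algebra is $v_{1}\cdots v_{m+n}$, so associativity of $\qdot$ yields at once
\[
\tau^{\qdot}_{1}(xy) \;=\; v_{1}\qdot\cdots\qdot v_{m+n}\;=\;\tau^{\qdot}_{1}(x)\qdot\tau^{\qdot}_{1}(y).
\]

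Finally, when $V$ has a unit $1\in V$, the map $\widetilde{p_{1}}\tau^{\qdot}$ agrees with $\tau^{\qdot}_{1}$ on $\overbar T^{c}(V)$ and sends $1_{\K}\in V^{\otimes 0}$ to $1\in V$, so it preserves units. Multiplicativity on pairs of nonempty tensors is the previous step, and the remaining cases involving $1_{\K}$ are handled by $1_{\K}\cdot y = y$, $x\cdot 1_{\K} = x$, together with the fact that $1$ is a two-sided unit for $\qdot$.
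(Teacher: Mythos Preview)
Your argument is correct and follows the same outline as the paper. The paper is considerably terser: for invertibility it simply notes $\qdot^{(0)}=\id_V$ (your upper-triangular argument made implicit), and for the explicit inverse it cites \cite[Section 2.2]{markl} rather than carrying out the binomial computation you give. For the homomorphism claims the paper takes a slightly more conceptual route: it observes that $\tau^{\qdot}_1$ and $\widetilde{p_1}\tau^{\qdot}$ are, by construction, the counits of the free--forget adjunctions between (non-unital, resp.\ unital) associative algebras and vector spaces, i.e.\ the unique algebra maps extending $\id_V$ from the free algebra on $V$. Your direct element-level check is of course equivalent and perfectly acceptable; the universal-property phrasing just makes the multiplicativity automatic rather than something to verify.
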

\begin{proof}
  As $\qdot^{(0)}=\id_V$, we know $\tau$ is invertible.
  For the inverse see for example \cite[Section 2.2]{markl}.
  By definition the maps $p_{1}\tau^{\qdot}$ and $\widetilde{p_{1}}\tau^{\qdot}$ are given by multiplication of generators
  \[
\begin{tikzcd}
  \overbar T^{c}(V)
  \ar[r,"\tau^{\qdot}"]\ar[rd,"{(\id,\qdot,\qdot^{(2)},\dots)
  }"']
  &\overbar T^{c}(V)\ar[d,"p_{1}"]
& T^{c}(V)
  \ar[r,"\tau^{\qdot}"]\ar[rd,"{(\eta,\id,\qdot,\qdot^{(2)},\dots)}"']
  & T^{c}(V)\ar[d,"\widetilde{p_1}"]
  \\&V,&&V.
\end{tikzcd}
\]
These are just the counits of the free-forget adjunctions between vector spaces and  (non-unital or unital) algebras: the homomorphisms given by extending $\id\colon V\to V$ to the respective free algebra.
\end{proof}

The $A_{\infty}$ case of the following result was thoroughly investigated in \cite{markl}.
\begin{proposition}\label{prop:twisted}
  Given a $B_{\infty}$ algebra $(V,\mu,d)$,
  then for any twisting $\tau$ on $V$ there is a twisted
$B_{\infty}$ algebra $(V,\mu^{\tau},d^{\tau})$ defined by
\[
\begin{tikzcd}
   T^c(V)\otimes  T^c(V)\ar[d,"\tau\otimes\tau"']\ar[r,"\mu^\tau"]&  T^c(V)\ar[d,"\tau"]\\
  T^c(V)\otimes  T^c(V)\ar[r,"\mu"]&  T^c(V),
\end{tikzcd}\qquad
\qquad
\begin{tikzcd} T^c(V)\ar[d,"\tau"']\ar[r,"d^\tau"]& T^c(V)\ar[d,"\tau"]\\
 T^c(V)\ar[r,"d"]&  T^c(V).
\end{tikzcd}
\]
\end{proposition}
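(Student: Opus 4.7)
The plan is to observe that the proposition is essentially a transport-of-structure statement along the coalgebra isomorphism $\tau$. A $B_{\infty}$ structure on $V$ is precisely a differential graded bialgebra structure on the cofree coalgebra $T^c(V)$ (with its canonical deconcatenation $\Delta$ and unit~$1$). Since $\tau$ is by definition a coalgebra automorphism of $T^c(V)$, so is $\tau^{\otimes 2}$ for $T^c(V)^{\otimes 2}$; hence conjugating both the product $\mu$ and the differential $d$ by $\tau$ will transport the differential bialgebra structure to another one on the same underlying coalgebra $(T^c(V),\Delta,1)$, yielding the desired $B_{\infty}$ structure on $V$.

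First I would check that $\mu^{\tau}=\tau^{-1}\mu(\tau\otimes\tau)$ is a coalgebra map satisfying the unit and associativity laws. Coalgebra-ness is clear as a composite of coalgebra maps (note $\tau^{-1}$ is also a coalgebra automorphism). The unit law uses $\tau(1)=1=\tau^{-1}(1)$, which holds because $1$ is the unique group-like element of $T^c(V)$ and coalgebra maps preserve group-likes. Associativity of $\mu^{\tau}$ then follows from associativity of $\mu$ since the $\tau$-factors cancel:
\[
\mu^{\tau}(\mu^{\tau}\otimes\id)=\tau^{-1}\mu(\mu\otimes\id)(\tau\otimes\tau\otimes\tau)=\tau^{-1}\mu(\id\otimes\mu)(\tau\otimes\tau\otimes\tau)=\mu^{\tau}(\id\otimes\mu^{\tau}).
\]

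Next I would verify that $d^{\tau}=\tau^{-1}d\tau$ is a differential on $T^c(V)$. It has degree $-1$ (since $\tau,\tau^{-1}$ have degree $0$) and satisfies $(d^{\tau})^{2}=\tau^{-1}d^{2}\tau=0$ and $d^{\tau}(1)=\tau^{-1}d(1)=0$. For the coderivation property, using that $\tau$ and $\tau^{-1}$ are coalgebra maps,
\[
\Delta d^{\tau}=\Delta\tau^{-1}d\tau=(\tau^{-1}\otimes\tau^{-1})\Delta d\tau=(\tau^{-1}\otimes\tau^{-1})(d\otimes\id+\id\otimes d)(\tau\otimes\tau)\Delta=(d^{\tau}\otimes\id+\id\otimes d^{\tau})\Delta,
\]
where the Koszul signs in the middle step are trivial because $|\tau|=|\tau^{-1}|=0$. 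Finally, the compatibility (that $d^{\tau}$ is a $\mu^{\tau}$-derivation) is a one-line conjugation: $d^{\tau}\mu^{\tau}=\tau^{-1}d\mu(\tau\otimes\tau)=\tau^{-1}\mu(d\otimes\id+\id\otimes d)(\tau\otimes\tau)$, and by the same Koszul-trivial identity $(d\otimes\id+\id\otimes d)(\tau\otimes\tau)=(\tau\otimes\tau)(d^{\tau}\otimes\id+\id\otimes d^{\tau})$, which gives $\mu^{\tau}(d^{\tau}\otimes\id+\id\otimes d^{\tau})$ after substituting in.

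There is no real obstacle here; the whole argument is pure transport of structure, and the only thing to watch is that the twistings have degree~$0$ so no Koszul signs intervene when interchanging them with $d$ (degree $-1$). If one prefers a structure-free summary: $\tau$ is an isomorphism in the category of conilpotent coalgebras, so pulling back the differential graded bialgebra $(T^c(V),\Delta,1,\mu,d)$ along $\tau$ produces another such object, which by definition is a $B_{\infty}$ structure on $V$.
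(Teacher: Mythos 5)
Your proposal is correct and follows essentially the same route as the paper: the paper's proof is precisely the transport-of-structure observation, stating that since $\tau$ is a coalgebra automorphism the conjugated maps $\mu^{\tau}=\tau^{-1}\mu\tau^{\otimes2}$ and $d^{\tau}=\tau^{-1}d\tau$ remain a coalgebra map and a coderivation satisfying the $B_\infty$ axioms. You simply spell out the verifications (unit law via uniqueness of the group-like element, associativity, square zero, coderivation and derivation properties, and the vanishing of Koszul signs since $\deg\tau=0$) that the paper leaves as ``clear.''
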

\begin{proof}
  The transfer of the original structure $\mu$, $d$ along the isomorphism $\tau$ gives the twisted structure $\mu^{\tau}=\tau^{-1}\mu \tau^{\otimes2}$ and $d^{\tau}=\tau^{-1}d\tau$. Since $\tau$ is a coalgebra automorphism it is clear that
  $\mu^{\tau}$ and $d^{\tau}$ still define a coalgebra map and a coderivation respectively and satisfy the  $B_{\infty}$ axioms (associativity, square zero, compatibility).
\end{proof}

\section{\binfty algebras and 2-associative differential (bi)algebras}

\subsection{The underlying \binfty algebra}\label{sec:31}
We can now return to the construction in Example \ref{ex:lr-mb} of multibrace algebras from spaces with two associative operations, and generalise it to $B_{\infty}$ algebras.

\begin{definition}[2-associative differential algebra]
  A 2-associative differential algebra is a differential graded algebra $(V,\pdot,1,\partial)$ endowed with a second associative binary operation $\qdot$, with the same unit $1$ but not required to satisfy any other compatibility relation with $\pdot$ or with $\partial$.
  Together with their homomorphisms they form a category $\dAs^{1,1}\text{-alg}$.
\end{definition}

To any 2-associative differential algebra $V$ there is an underlying $B_\infty$ structure on $V$ defined by twisting the quasi-trivial structure:

\begin{theorem}[The underlying $B_\infty$ algebra]\label{thm:lr-mb}
  For any 2-associative differential algebra
  $(V,\pdot,\qdot,1,\partial)$
  there is an underlying
   $B_\infty$ structure
  $\mu_{\pdot}^{\qdot},d_{\partial}^{\qdot}$ on $V$
  uniquely defined by
  \begin{equation}\label{eq:associated-binfty-diags}
\begin{tikzcd}
  \overbar T^c(V)\otimes \overbar T^c(V)
  \ar[d,"\tau^{\qdot}_{1}\otimes\tau^{\qdot}_{1}"']
  \ar[r,"\mu^{\qdot}_{\pdot}"]&
  \overbar T^c(V)\ar[d,"\tau^{\qdot}_{1}"]\\
  V\!\otimes\!V\ar[r,  "\pdot"]&V,
\end{tikzcd}
\qquad\qquad
\begin{tikzcd}
  \overbar T^c(V)\ar[d,"\tau^{\qdot}_{1}"']
  \ar[r,"d^{\qdot}_{\partial}"]&
  \overbar T^c(V)\ar[d,"\tau^{\qdot}_{1}"]\\
  V\ar[r,"\partial"]&V,
\end{tikzcd}
\end{equation}
where $\tau^{\qdot}$ is the twisting determined by $\qdot$.
\end{theorem}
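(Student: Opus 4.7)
The plan is to construct the $B_\infty$ structure by composing three earlier results, after which the claimed defining diagrams and their uniqueness fall out cleanly. The differential graded algebra $(V,\pdot,\partial)$ underlying the given 2-associative differential algebra furnishes, via Proposition~\ref{prop:difstuf}, the quasi-trivial $B_\infty$ structure $(\mu_\pdot, d_\partial)$ on $V$. The extra associative operation $\qdot$ determines a twisting $\tau = \tau^{\qdot}$ on $V$ by Lemma~\ref{lem:twistmult}. Applying Proposition~\ref{prop:twisted} to transport the quasi-trivial structure along $\tau$ then yields the candidate $B_\infty$ structure
\[
\mu_\pdot^{\qdot} \;:=\; \tau^{-1}\mu_\pdot(\tau\otimes\tau),\qquad d_\partial^{\qdot} \;:=\; \tau^{-1}d_\partial\tau.
\]

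To verify that the defining diagrams \eqref{eq:associated-binfty-diags} commute, I would first observe that $\tau$ is unital: since $\tau$ is a coalgebra map extending $\tau_1$ with $\tau_1(1)=0$, one has $\tau(1)=1$, so $\tau$ preserves $\overbar T^c(V)$. Then a direct calculation gives
\[
\tau_1\mu_\pdot^{\qdot} \;=\; p_1\tau\circ\tau^{-1}\mu_\pdot(\tau\otimes\tau) \;=\; p_1\mu_\pdot(\tau\otimes\tau) \;=\; \pdot(p_1\tau\otimes p_1\tau) \;=\; \pdot(\tau_1\otimes\tau_1)
\]
on $\overbar T^c(V)^{\otimes 2}$, where the third equality invokes the defining identity $p_1\mu_\pdot = \pdot(p_1\otimes p_1)$ of Proposition~\ref{prop:difstuf}. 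Analogously, $\tau_1 d_\partial^{\qdot} = p_1d_\partial\tau = \partial p_1\tau = \partial\tau_1$, so the second diagram commutes as well.

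For uniqueness, suppose $(\mu', d')$ is any $B_\infty$ structure on $V$ satisfying \eqref{eq:associated-binfty-diags}. Then $\tau\mu'$ is a coalgebra map and $\tau d'$ a coderivation, and the prescribed diagrams give $p_1(\tau\mu') = \pdot(\tau_1\otimes\tau_1)$ and $p_1(\tau d') = \partial\tau_1$. By the cofreeness properties of Remark~\ref{rk:tensorcoalg} (b) and (c), these projections to $V$ determine $\tau\mu'$ and $\tau d'$, and hence $\mu'$ and $d'$ since $\tau$ is an automorphism. Combined with the existence construction above, this proves the theorem.

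The main obstacle here is conceptual rather than technical: recognising that the intricate recursive formula \eqref{eq:mij} of Loday--Ronco, whose underlying multibrace axioms \eqref{eq:Rijk} required a delicate combinatorial verification in \cite[Section 3.3 and Proposition 3.4]{LodayRonco2006}, is simply a twisting of the (non-commutative) quasi-shuffle $B_\infty$ structure by $\tau^{\qdot}$. Once this viewpoint is adopted, the theorem reduces to a short diagram chase combining the results of the previous section, with all associativity, square-zero and compatibility relations absorbed into Proposition~\ref{prop:twisted}.
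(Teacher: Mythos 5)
Your proposal is correct and follows exactly the paper's route: the paper's proof is the one-line observation that one twists the quasi-trivial $B_\infty$ structure of Proposition~\ref{prop:difstuf} by $\tau^{\qdot}$ via Proposition~\ref{prop:twisted}. Your additional explicit verification of the diagrams \eqref{eq:associated-binfty-diags} and of uniqueness by cofreeness (Remark~\ref{rk:tensorcoalg}) merely fills in details the paper leaves implicit, and is sound, granting the small point that $\tau d'$ is a coderivation only for the $\tau$-twisted bicomodule structure (equivalently, one may argue with the genuine coderivation $\tau d'\tau^{-1}$).
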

\begin{proof}
    Twist with $\tau^{\qdot}$
  the quasi-trivial  $B_{\infty}$ structure $\mu_\pdot,d_\partial$ on $V$, by Propositions~\ref{prop:difstuf} and \ref{prop:twisted}.
\end{proof}
Homomorphisms of 2-associative differential algebras define homomorphisms of the underlying $B_\infty$ algebras, and we have a functor
\begin{equation}\label{eq:associated-b-infty-functor}
\dAs^{1,1}\text{-alg} \to B_{\infty}\text{-alg}.
\end{equation}
There are many useful reformulations of the defining property of the underlying
$B_\infty$ structure $\mu_\pdot^{\qdot},d_\partial^{\qdot}$. For example, it is the unique structure with the following equivalent properties:
\begin{itemize}
\item
$\tau^{\qdot}_1\colon (\overbar T^c(V),\mu^{\qdot}_\pdot,d^{\qdot}_\partial)\to
(V,\pdot,\partial)$ is a non-unital differential graded algebra homomorphism.
\item
        $\widetilde{p_1}\tau^{\qdot}
        \colon (T^c(V),\mu^{\qdot}_\pdot,d^{\qdot}_\partial,1_{\K})\to
        (V,\pdot,\partial,1)$ is a differential graded algebra homomorphism.
\item there exists a (necessarily unique) 2-associative differential algebra homomorphism
        \begin{equation}\label{eq:counit-adjunction}
        \varepsilon_{V}\colon
        ( T^c(V),\mu^{\qdot}_\pdot,\qdot,d^{\qdot}_\partial,1_{\K})\to
        (V,\pdot,\qdot,\partial,1)
        \quad\text{such that }\quad
        (\varepsilon_V)_{|V}=\id_V,
                \end{equation}
        where the multiplication $\qdot$ on $T^c(V)$ is concatenation.
        In fact
$\varepsilon_V=
        \widetilde{p_1}\tau^{\qdot}$, see Lemma \ref{lem:twistmult}.
\item  the components $m_{i,j}^r\colon V^{\otimes i}\otimes V^{\otimes j}\to V^{\otimes r}$ and $m_n^k\colon V^{\otimes n}\to V^{\otimes k}$ satsify
\begin{equation}\label{eq:associated-binf}
  \sum_{r=1}^{i+j}\qdot^{(r-1)}m^r_{i,j}=\pdot(\qdot^{(i-1)}\otimes\qdot^{(j-1)})
  ,\qquad\qquad
  \sum_{k=1}^n\qdot^{(k-1)}\, m_{n}^{k}=\partial \,\qdot^{(n-1)}
  .
\end{equation}
\end{itemize}

The first of the defining equations \eqref{eq:associated-binf} is just Equation \eqref{eq:mu-r-lift}
of Example \ref{ex:lr-mb}, and so the multibrace structure can be calculated using the recursive formula
\eqref{eq:mij}
(compare \cite[Section 3.3 and Proposition 3.4]{LodayRonco2006}).
The second says
that the $A_\infty$ structure can be calculated,  using \eqref{eq:mnk}, from
\begin{equation}\label{eq:mn}
  m_{n}\;=\;\partial\,\qdot^{(n-1)}\;-\;
  \sum_{k=2}^{n}\qdot^{k-1}\sum_{\myclap{\substack{i,j\geq0\\i+1+j=k}}}
  \id_V^{\otimes i}\otimes m_{n-i-j}\otimes \id_V^{\otimes j}.
\end{equation}
In fact this recursion has a very simple explicit solution
(see \cite[Example 1.3]{markl}) with
\begin{align*}
  m_{2}(v_{1}\otimes v_{2})
  &=\partial(v_{1}\qdot v_{2})
    -\partial(v_{1})\qdot v_{2}
    -(-1)^{\deg{v_{1}}}\; v_{1}\qdot\partial(v_{2}),\\
  m_{3}(v_{1}\otimes v_{2}\otimes v_{3})
  &=\partial(v_{1}\qdot v_{2}\qdot v_{3})
    -\partial(v_{1}\qdot v_{2}) \qdot v_{3}
    -(-1)^{\deg{v_{1}}}\;v_{1}\qdot\partial(v_{2}\qdot v_{3}),\\
  m_{n}(v_{1}\otimes \cdots\otimes v_{n})
  &=\partial(v_{1}\qdot \dots\qdot v_{n})
    -\partial(v_{1}\qdot\cdots\qdot v_{n-1}) \qdot v_{n}
    -(-1)^{\deg{v_{1}}}\;v_{1}\qdot\partial(v_{2}\qdot \cdots\qdot v_{n})\\
&   {\qquad\qquad\qquad\qquad\qquad} +(-1)^{\deg{v_{1}}}\;v_{1}\qdot\partial(v_{2}\qdot \cdots\qdot v_{n-1})\qdot v_{n}
  \text{ for all }n\ge4.\end{align*}

As well as the above properties of the natural projection $T^c(V)\to V$ for any 2-associative differential algebra $V$, we also discuss the natural inclusion $A\to T^c(A)$ for any  $B_\infty$ algebra $A$.

\begin{theorem}\label{thm:incl-hm-b-inf}
    Let $A$ be a  $B_{\infty}$ algebra,  with structure maps given by a multiplication and a differential on the tensor coalgebra $V=T^{c}(A)$ that we denote $\bullet$ and $\partial$ respectively.
    Consider
  \begin{itemize}
    \item
          the 2-associative differential algebra  $(V,\pdot,\qdot,\partial)$,

\noindent          (where the second associative multiplication $\qdot$ on $V$ is concatenation of tensors)
          \item the underlying  $B_{\infty}$ structure  $\mu_\pdot^{\qdot}$ and $d_{\partial}^{\qdot}$ on $V$ given by Theorem \ref{thm:lr-mb}.
  \end{itemize}
  Then
the inclusion map $\iota_1\colon A\to V$ is a homomorphism of $B_{\infty}$ algebras.
\end{theorem}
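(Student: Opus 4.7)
The plan is to extend $\iota_{1}$ via cofreeness and reduce the $B_{\infty}$ homomorphism condition to pointwise equalities, which I verify by induction on arity using the recursive formulas \eqref{eq:mn} and \eqref{eq:mij}. A $B_{\infty}$ homomorphism $\iota_{1}\colon A\to V$ is by definition a unital coalgebra map $\iota\colon T^{c}(A)\to T^{c}(V)$ intertwining both the $A_{\infty}$ differentials and the multibrace multiplications. By Remark~\ref{rk:tensorcoalg}(b), the inclusion of primitives $\iota_{1}\colon A=V^{\otimes 1}\hookrightarrow V$ extends uniquely to such a coalgebra map $\iota$, and explicitly $\iota(a_{1}\cdots a_{n})=\iota_{1}(a_{1})\otimes\cdots\otimes\iota_{1}(a_{n})\in V^{\otimes n}$. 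Write $(m_{n}^{A})$ and $(m_{i,j}^{A})$ for the $B_{\infty}$ operations on $A$, so that $\partial$ and $\pdot$ on $V=T^{c}(A)$ are their coderivation and coalgebra extensions, and write $(m_{n})$, $(m_{i,j})$ for the operations of the underlying $B_{\infty}$ structure on $V$ from Theorem~\ref{thm:lr-mb}. Since $\iota\partial$ and $d_{\partial}^{\qdot}\iota$ are coderivations from $T^{c}(A)$ into the $T^{c}(V)$-bicomodule $T^{c}(A)$ via $\iota$, and since $\iota\pdot$ and $\mu_{\pdot}^{\qdot}(\iota\otimes\iota)$ are coalgebra maps from the conilpotent $T^{c}(A)^{\otimes 2}$, the cofreeness properties in Remark~\ref{rk:tensorcoalg}(b,c) reduce the intertwining conditions to the identities
\[
m_{n}\big|_{A^{\otimes n}}=m_{n}^{A}\qquad\text{and}\qquad m_{i,j}\big|_{A^{\otimes i}\otimes A^{\otimes j}}=m_{i,j}^{A},
\]
where $A\subset V$ is the inclusion of primitives.

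For the $A_{\infty}$ part I proceed by induction on $n$. Evaluating \eqref{eq:mn} on $(a_{1},\dots,a_{n})\in A^{\otimes n}$, the leading term $\partial\qdot^{(n-1)}$ produces $\partial(a_{1}\otimes_{A}\cdots\otimes_{A} a_{n})\in V$. Since $\partial$ is the coderivation on $T^{c}(A)$ extending $\sum_{k}m_{k}^{A}$, this expands (by Remark~\ref{rk:tensorcoalg}(c)) into a sum over pairs $1\le s\le t\le n$ whose summand equals (up to Koszul sign) $a_{1}\otimes_{A}\cdots\otimes_{A} a_{s-1}\otimes_{A} m^{A}_{t-s+1}(a_{s}\otimes\cdots\otimes a_{t})\otimes_{A} a_{t+1}\otimes_{A}\cdots\otimes_{A} a_{n}$, lying in $A^{\otimes(n-t+s)}\subseteq V$. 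The distinguished summand $(s,t)=(1,n)$ is $m_{n}^{A}(a_{1}\otimes\cdots\otimes a_{n})\in A$. The remaining summands correspond bijectively, via the substitution $(i,j,k)=(s-1,\,n-t,\,i+1+j)$, to the correction sum in \eqref{eq:mn} indexed by $k\ge 2$: the inductive hypothesis $m_{n-i-j}|_{A^{\otimes(n-i-j)}}=m^{A}_{n-i-j}$ identifies the inner operation, and then $\qdot^{(k-1)}$ concatenates the result back into $A^{\otimes k}\subseteq V$, matching the corresponding summand of $\partial(a_{1}\otimes_{A}\cdots\otimes_{A} a_{n})$ exactly. Thus the correction cancels everything except the $(s,t)=(1,n)$ term, giving $m_{n}|_{A^{\otimes n}}=m_{n}^{A}$.

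The multibrace case is analogous, by induction on $i+j$. The leading term $\pdot(\qdot^{(i-1)}\otimes\qdot^{(j-1)})$ applied to $(a_{1},\dots,a_{i})\otimes(b_{1},\dots,b_{j})$ equals $\pdot(a_{1}\otimes_{A}\cdots\otimes_{A} a_{i},\,b_{1}\otimes_{A}\cdots\otimes_{A} b_{j})\in V$. By Remark~\ref{rk:tensorcoalg}(b) applied to the coalgebra map $\pdot$, with $p_{1}\pdot=\sum_{i,j\ge 1}m^{A}_{i,j}$, this expands as $\sum_{r\ge 1}\sum_{(\underline i,\underline j)\in C_{r}^{i}\times C_{r}^{j}}(m^{A}_{i_{1},j_{1}}\otimes\cdots\otimes m^{A}_{i_{r},j_{r}})\Delta_{r}^{\underline i,\underline j}$, each summand embedded as an element of $A^{\otimes r}\subseteq V$. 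The $r=1$ piece is precisely $m^{A}_{i,j}$. For $r\ge 2$ every nonzero factor $m^{A}_{i_{s},j_{s}}$ has $i_{s}+j_{s}<i+j$ (since $m_{0,0}=0$ kills any summand that concentrates all the mass at a single index $s$), so by induction $m_{i_{s},j_{s}}|_{A^{\otimes i_{s}}\otimes A^{\otimes j_{s}}}=m^{A}_{i_{s},j_{s}}$; applying $\qdot^{(r-1)}$ then produces exactly the correction terms of \eqref{eq:mij}, which cancel the $r\ge 2$ contributions. We obtain $m_{i,j}|_{A^{\otimes i}\otimes A^{\otimes j}}=m^{A}_{i,j}$. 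The main bookkeeping nuisance is the interplay between the outer tensor of $T^{c}(V)$ (which indexes the $r$-ary part of $\pdot$) and the inner concatenation $\qdot$ in $V=T^{c}(A)$ that flattens it back; the Koszul signs on the two sides arise from identical inversion combinatorics of the maps $\Delta_{r}^{\underline i,\underline j}$, so they match automatically.
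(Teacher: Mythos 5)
Your proof is correct and follows essentially the same route as the paper's: both reduce the $B_\infty$ homomorphism condition by cofreeness to the arity-one component identities $m_n\iota_1^{\otimes n}=\iota_1 m_n^A$ and $m_{i,j}(\iota_1^{\otimes i}\otimes\iota_1^{\otimes j})=\iota_1 m_{i,j}^A$, and then prove these by induction on arity by comparing the expansion of $\partial\iota_n$ and $\pdot(\iota_i\otimes\iota_j)$ (via the coderivation and coalgebra-map formulas, using $\qdot^{(k-1)}\iota_1^{\otimes k}=\iota_k$) with the correction terms of the defining recursions \eqref{eq:mn} and \eqref{eq:mij}. The paper phrases the cancellation via the summed identities \eqref{eq:associated-binf} and the auxiliary relations $(R^r_{i,j})$, $(S^k_n)$ for $r,k\ge 2$, but this is only a cosmetic difference.
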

\begin{proof}
  We will show that the following relations hold,
  \[
    (R_{i,j}^r)\qquad
    m_{i,j}^r(\iota_1^{\otimes i}\otimes \iota_1^{\otimes j})=\iota_1^{\otimes r}\overbar m_{i,j}^r,
    \qquad\qquad\quad
    (S_n^k)\qquad m_n^k\iota_1^{\otimes n}=\iota_1^{\otimes k}\overbar m_n^k,
  \quad\]
where a bar distinguishes the operations on $A$ from those on $V$.
We first observe the concatenation product satisfies $\qdot^{(n-1)}\iota_1^{\otimes n}=\iota_n\colon A^{\otimes n}\to V$.
Applying equations \eqref{eq:associated-binf} to
$\iota_1^{\otimes i}\otimes \iota_1^{\otimes j}$ and to $\iota_1^{\otimes n}$ we get
  \begin{equation*}
    \sum_{r=1}^{i+j}\qdot^{(r-1)}m^r_{i,j}(\iota_1^{\otimes i}\otimes \iota_1^{\otimes j})
    =\pdot(\iota_i\otimes\iota_j)=
    \sum_{r=1}^{i+j}\iota_r\overbar m_{i,j}^r,
    \qquad\quad
    \sum_{k=1}^n\qdot^{(k-1)} m_{n}^{k}\iota_1^{\otimes n}
    =\partial\iota_n=
    \sum_{k=1}^n\iota_k\overbar m_n^k.
\end{equation*}
Next suppose inductively that $(R_{i',j'}^1)$ holds
for $i'\leq i$, $j'\leq j$ except $(i',j')=(i,j)$,
and that  $(S_{n'}^1)$ holds for $n'<n$.
Then $(R_{i,j}^r)$ holds for $r\geq2$ by  \eqref{eq:mu-r},
and $(S_{n}^k)$ holds for $k\geq2$ by \eqref{eq:mnk}. That is, for $r\geq2$ and $k\geq 2$ we have
  \begin{equation*}
    \qdot^{(r-1)}m^r_{i,j}(\iota_1^{\otimes i}\otimes \iota_1^{\otimes j})
    = \qdot^{(r-1)}\iota_1^{\otimes r}\overbar m^r_{i,j}
    = \iota_r\overbar m^r_{i,j},
    \qquad\quad
    \qdot^{(k-1)}m_n^k\iota_1^{\otimes n}
    = \qdot^{(k-1)}\iota_1^{\otimes k} \overbar m_n^k
    = \iota_{k} \overbar m_n^k.
  \end{equation*}
  All terms except those for $r=1$ and $k=1$ in the summations above therefore cancel, leaving the relations $(R_{i,j}^1)$ and $(S_n^1)$ as required.
\end{proof}

\subsection{Universal enveloping 2-associative differential algebras}
\label{subsec:U}

The underlying $B_{\infty}$ algebra functor has a left adjoint,
the \emph{universal enveloping} functor
\[
U:B_{\infty}\text{-alg}\to \dAs^{1,1}\text{-alg}.
\]
It has the following explicit construction. Given a $B_{\infty}$ algebra $A$, consider the free 2-associative differential algebra
$
F_{\dAs^{1,1}}(A)
$,
so that composition with the inclusion
$\alpha:A\to
F_{\dAs^{1,1}}(A)
$
gives a bijection between maps
$\widetilde f\colon
F_{\dAs^{1,1}}(A)
\to V$ in $\dAs^{1,1}$
and linear maps $f\colon A\to V$.
Observe that $\widetilde f\colon
F_{\dAs^{1,1}}(A)
\to V$ will always define a homomorphism of the underlying $B_{\infty}$ algebras, and that $f=\widetilde f\alpha$ defines a $B_{\infty}$ algebra homomorphism if and only if $\widetilde f$ vanishes on elements of the form
\begin{align}
  \alpha(\overbar m_{i,j}(a\otimes b))\!&-\!m_{i,j}(\alpha^{\otimes (i+j)}(a\otimes b),&a\in A^{\otimes i},\,b\in A^{\otimes j},\\
   \alpha(\overbar m_n(c))\!&-\!m_n(\alpha^{\otimes n}(c)),&c\in A^{\otimes n}.\phantom{,a\in A^{\otimes i}}
\end{align}
Here $(m_{i,j}),( m_{n})$ are the associated $B_{\infty}$ structure maps on $
F_{\dAs^{1,1}}(A)
$ and $(\overbar m_{i,j}),(\overbar m_{n})$ those on $A$. Let $I$ be the two-sided ideal of
$
F_{\dAs^{1,1}}(A)
$ generated by the above elements.
Then the universal enveloping 2-associative differential algebra can be defined by the quotient
\[
  U(A)
  \;\;=\;\;F_{\dAs^{1,1}}(A)/ I
\]
since by construction we have a bijection between homomorphisms $\widetilde f\colon
F_{\dAs^{1,1}}(A)
/I\to V$ of 2-associative differential algebras and homomorphisms $f\colon A\to V$ of $B_{\infty}$ algebras.

In view of the results of section \ref{sec:31} there is a simpler presentation of the universal enveloping 2-associative differential algebra:
\begin{proposition}\label{prop:tc=u}
There is a left adjoint to the underlying $B_{\infty}$ algebra functor defined by
\[
T^{c}:B_{\infty}\text{-alg}\to \dAs^{1,1}\text{-alg},
\]
sending a $B_{\infty}$ algebra $A$ to $(T^{c}(A),\partial,\pdot,\qdot)$, where $\partial$ and $\pdot$ are the $B_{\infty}$ structure maps on $A$ and $\qdot$ is concatenation of tensors.

This functor is isomorphic to the universal enveloping algebra $U$.
\end{proposition}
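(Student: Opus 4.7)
The plan is to establish the adjunction $T^{c}\dashv U_{B}$, where $U_{B}:\dAs^{1,1}\text{-alg}\to B_{\infty}\text{-alg}$ denotes the underlying $B_{\infty}$ algebra functor of \eqref{eq:associated-b-infty-functor}, by exhibiting the unit and counit and checking the triangle identities. The isomorphism $T^{c}\cong U$ will then follow formally from uniqueness of left adjoints.

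For the unit at a $B_{\infty}$ algebra $A$ I take the inclusion $\eta_{A}=\iota_{1}:A\to T^{c}(A)$, which is a $B_{\infty}$ homomorphism into the underlying $B_{\infty}$ algebra of $T^{c}(A)$ by Theorem~\ref{thm:incl-hm-b-inf}. For the counit at a 2-associative differential algebra $V$ I take $\varepsilon_{V}:T^{c}(V)\to V$ from \eqref{eq:counit-adjunction}, which is a $\dAs^{1,1}$ homomorphism satisfying $(\varepsilon_{V})_{|V}=\id_{V}$. The first triangle identity $U_{B}(\varepsilon_{V})\circ\iota_{1}=\id_{V}$ is then immediate.

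The main step I expect to require care is the second triangle identity $\varepsilon_{T^{c}(A)}\circ T^{c}(\iota_{1})=\id_{T^{c}(A)}$. Both sides are $\dAs^{1,1}$ endomorphisms of $T^{c}(A)$, hence in particular unital algebra maps for the concatenation product $\qdot$. Since $(T^{c}(A),\qdot,1)$ is the free unital associative algebra on the vector space $A$, any such map is determined by its restriction to $A\subset T^{c}(A)$. Both maps restrict to $\id_{A}$: the identity trivially, and the composite because $T^{c}(\iota_{1})$ sends $a\in A$ to the length-one tensor in $T^{c}(T^{c}(A))$ with entry $a\in T^{c}(A)$, on which $\varepsilon_{T^{c}(A)}=\widetilde{p_{1}}\tau^{\qdot}$ acts as the identity by Lemma~\ref{lem:twistmult}.

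Finally, having established $T^{c}\dashv U_{B}$ alongside $U\dashv U_{B}$, the natural isomorphism $T^{c}\cong U$ is immediate from the uniqueness of left adjoints. Concretely, applying the two universal properties to the natural $B_{\infty}$ homomorphisms $A\to U(A)$ and $\iota_{1}:A\to T^{c}(A)$ yields unique $\dAs^{1,1}$ homomorphisms $T^{c}(A)\to U(A)$ and $U(A)\to T^{c}(A)$ whose composites extend $\id_{A}$ and so must be identities by a final invocation of uniqueness.
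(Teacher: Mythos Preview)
Your proof is correct and follows essentially the same route as the paper: the same unit $\eta_A=\iota_1$ and counit $\varepsilon_V=\widetilde{p_1}\tau^{\qdot}$, the same verification of the triangle identities via restriction to generators (the paper phrases this as ``$\widetilde{p_1}\tau^{\qdot}$ is right inverse to both $T^c(\iota_1)$ and $\iota_1$''), and the same appeal to uniqueness of left adjoints for $T^c\cong U$. Your treatment of the second triangle identity, using that $(T^c(A),\qdot)$ is free on $A$, is a slightly more explicit unpacking of what the paper states tersely.
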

\begin{proof}
Unit and counit maps for the required adjunction between $T^{c}$ and the underlying  $B_\infty$ algebra functor can be defined, by inclusion and multiplication of generators,
\[
  \eta_A=\iota_1\colon A\to T^c(A)\;\in\;B_{\infty}\text{-alg}
  ,\qquad\qquad \varepsilon_V=\widetilde{p_1}\tau^{\qdot}\colon T^c(V)\to V\;\in\;\dAs^{1,1}\text{-alg},
\]
see Theorem~\ref{thm:incl-hm-b-inf} and equation \eqref{eq:counit-adjunction}.
Now the fact that $\widetilde p_{1}\tau^{\qdot}$ is right inverse to both $T^c(\iota_{1})$ and $\iota_{1}$ gives the triangle identities
  \[
\begin{tikzcd}
  T^{c}(A)
  \ar[r,"T^c(\eta_{A})"]\ar[rd,"\id"']
  &T^{c}T^{c}(A)\ar[d,"\varepsilon_{T^{c}(A)}"]
&& V
  \ar[r,"\eta_{V}"]\ar[rd,"\id"']
  & T^{c}(V)\ar[d,"\varepsilon_{V}"]
  \\&T^c(A),&&&V ,
\end{tikzcd}
\]
and we indeed have an adjunction.
The two left adjoints $U$, $T^{c}$ are thus canonically isomorphic:
the required natural isomorphism $\iota_1'\colon U(A)\stackrel\cong\to T^c(A)$ is the 2-associative differential algebra map that corresponds, under the former adjunction, to the unit $\iota_1\colon A\to T^c(A)$ of the latter.
\end{proof}

\subsection{The equivalence of categories}

\begin{definition}[2-associative differential bialgebra]
  A 2-associative differential bialgebra
  is a 2-associative differential  algebra $(V,\partial,\pdot,\qdot,1)$ endowed with a comultiplication $\Delta$ such that
  $(V,\partial,\pdot,\Delta,1)$ is a differential graded bialgebra and $(V,\qdot,\Delta,1)$ is an infinitesimal bialgebra. Together with their homomorphisms they form a category $\dAs^{1,1}\text{-bialg}$.

  A  2-associative differential bialgebra is termed
conilpotent or cofree if it is conilpotent, respectively cofree,  as a unital coalgebra.
\end{definition}

Our aim now is to show there is an equivalence of categories
\[\begin{tikzcd}[every arrow/.append style={shift left=0.7ex}]
 B_{\infty}\text{-alg} \ar[r,"U"]&\ar[l,"\P"]
  \dAs^{1,1}\text{-bialg}_{\text{conil}}\,.
  \end{tikzcd}
\]

By Proposition~\ref{prop:tc=u} the universal enveloping 2-associative differential algebra is just the tensor coalgebra, so the functor in one direction is clear:

\begin{proposition}
The  universal enveloping
2-associative differential algebra
of a $B_\infty$-algebra
is naturally endowed with the structure of a cofree
2-associative differential bialgebra
\[U(A)=(T^c(A),\partial,\pdot,\qdot,\Delta,1_{\K}),\]
where the comutiplication $\Delta$ is given by deconcatenation of tensors, and there is a functor
\[
  U\colon B_\infty\text{-alg}
  \longrightarrow
  \dAs^{1,1}\text{-bialg}.
\]
\end{proposition}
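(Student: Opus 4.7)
The plan is to decompose the statement into three sub-claims and match each against a result already in hand. We need to verify: (i) the stated data on $T^c(A)$ satisfies the axioms of a 2-associative differential bialgebra; (ii) the assignment $A\mapsto U(A)$ extends to a functor with values in $\dAs^{1,1}\text{-bialg}$; and (iii) $U(A)$ is cofree.

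For (i), the definition of a 2-associative differential bialgebra unpacks into two compatibility conditions between the two multiplications and the deconcatenation comultiplication $\Delta$. First, $(T^c(A),\partial,\pdot,\Delta,1)$ must be a differential graded bialgebra; but this is precisely the definition of a $B_{\infty}$ structure on $A$, which by design gives a coalgebra map $\pdot=\mu^{\qdot}_{\pdot_A}$ and a compatible differential $\partial=d^{\qdot}_{\partial_A}$ on $(T^c(A),\Delta)$. Second, $(T^c(A),\qdot,\Delta,1)$ must be a unital infinitesimal bialgebra; but this is Example~\ref{ex:fund-inf-bialg} applied to $V=A$, which identifies this triple with the fundamental infinitesimal bialgebra $T^{fc}(A)$.

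For (ii), I would invoke Proposition~\ref{prop:tc=u}, which already supplies a functor $T^c\colon B_\infty\text{-alg}\to\dAs^{1,1}\text{-alg}$ sending $A$ to $U(A)$. To lift it to $\dAs^{1,1}\text{-bialg}$ I only need that $U(f)$ respects $\Delta$ for every $B_\infty$-morphism $f\colon A\to A'$. But a $B_{\infty}$-morphism is by definition a coalgebra map of the tensor coalgebras compatible with $\mu$ and $d$, and the adjunction bijection of Proposition~\ref{prop:tc=u} identifies $U(f)$ with precisely this coalgebra map, so preservation of $\Delta$ is automatic. Cofreeness, claim (iii), is then immediate since the underlying coalgebra of $U(A)$ is the cofree conilpotent coalgebra $T^c(A)$ described in Section~\ref{sub:tensorcoalg}.

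I do not anticipate any real obstacle: the proposition is essentially a bookkeeping assembly of three ingredients already established — the definition of $B_{\infty}$ algebra, the fundamental infinitesimal bialgebra of Example~\ref{ex:fund-inf-bialg}, and the adjunction of Proposition~\ref{prop:tc=u}. The one point meriting some care is to remember that the $\qdot$--$\Delta$ compatibility demanded in the definition of $\dAs^{1,1}\text{-bialg}$ is the \emph{infinitesimal} relation, not an ordinary bialgebra one, which is precisely what Example~\ref{ex:fund-inf-bialg} delivers.
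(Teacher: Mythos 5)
Your proposal is correct and follows essentially the same route as the paper: the differential graded bialgebra axioms for $(T^c(A),\partial,\pdot,\Delta,1_{\K})$ hold by the very definition of a $B_\infty$ structure, the unital infinitesimal compatibility of $\qdot$ with $\Delta$ is Example~\ref{ex:fund-inf-bialg}, and cofreeness and functoriality are immediate from the underlying tensor coalgebra and Proposition~\ref{prop:tc=u}. The only quibble is notational: the $\pdot$ and $\partial$ here are the given $B_\infty$ structure maps $\mu$, $d$ on $T^c(A)$ themselves, not the twisted structure $\mu^{\qdot}_{\pdot}$, $d^{\qdot}_{\partial}$ of Theorem~\ref{thm:lr-mb}, but this does not affect the argument.
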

\begin{proof}
  By definition of  $B_\infty$ structures $(T^{c}(A),\partial,\pdot,\Delta,1_{\K})$ is a differential graded bialgebra, and
  $(T^{c}(A),\qdot,\Delta,1_{\K})$ is a (free, cofree) unital infinitesimal
bialgebra by Example \ref{ex:fund-inf-bialg}.\end{proof}

The following result is the promised analogue of the Milnor--Moore theorem, and should be compared with Corollary~\ref{cor:corefl}.

\begin{theorem}\label{thm:milnor-moore}
  The underlying $B_{\infty}$ algebra structure on a 2-associative differential bialgebra restricts to the primitives, and the functor
  \[
  \P\colon \dAs^{1,1}\text{-bialg}
  \longrightarrow
  B_\infty\text{-alg}
\]
is right adjoint to $U$. The unit of the adjuntion is a natural isomorphism
\[
\eta_A\colon A\stackrel\cong\longrightarrow \P(U(A)),
\]
while the counit
\[
\varepsilon_V\colon U(\P(V))\longrightarrow V
\]
is a monomorphism and is an isomorphism if and only if $V$ is conilpotent.

In particular, the functors $U$ and $\P$  define an adjoint equivalence of categories between $B_{\infty}$ algebras and the
full coreflective subcategory of $\dAs^{1,1}\text{-bialg}$
given by the conilpotent objects.
\end{theorem}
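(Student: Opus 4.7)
The plan is to build the right adjoint $\P$ explicitly, identify the counit $\varepsilon_{V}$ with the Loday--Ronco embedding $F_{V}$ of Proposition~\ref{prop:LR-inf-class}, and then deduce the adjoint equivalence by restriction to the conilpotent subcategory.

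Let $V=(V,\partial,\pdot,\qdot,\Delta,1)$ be a 2-associative differential bialgebra. First I would observe that the conilpotent radical $R(V)$ is a sub-2-associative differential bialgebra: closure under $\qdot$ is Lemma~\ref{lem:inf-delta-prim}(2); closure under $\pdot$ follows by iterating $\Delta(xy)=\Delta(x)\Delta(y)$ to obtain multiplicativity of the coradical filtration; and closure under $\partial$ follows because $\partial$ is a coderivation of $(V,\Delta)$. Since $R(V)$ is then conilpotent as a unital infinitesimal bialgebra, Proposition~\ref{prop:LR-inf-class} provides a natural isomorphism of unital infinitesimal bialgebras
\[
F_{V}\colon T^{fc}(\P(V))\stackrel\cong\longrightarrow R(V),
\]
using $\P(V)=\P(R(V))$. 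Transporting the remaining structure $(\pdot,\partial)$ across $F_{V}$ upgrades $T^{c}(\P(V))$ to a differential graded bialgebra, which by definition is a $B_{\infty}$ structure on $\P(V)$. Theorem~\ref{thm:incl-hm-b-inf} applied to the cofree bialgebra $T^{c}(\P(V))$ then tells us the inclusion $\P(V)\hookrightarrow R(V)\hookrightarrow V$ is a $B_{\infty}$ homomorphism, so the underlying $B_{\infty}$ structure on $V$ (from Theorem~\ref{thm:lr-mb}) really does restrict to our chosen structure on $\P(V)$. Naturality in $V$ yields the functor $\P\colon\dAs^{1,1}\text{-bialg}\to B_{\infty}\text{-alg}$.

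Next, since $U(A)=T^{c}(A)$ with $\qdot$ being concatenation (Proposition~\ref{prop:tc=u}), the map $F_{U(A)}$ is the tautological isomorphism $T^{c}(A)\to T^{c}(A)$, and the $B_{\infty}$ structure transported onto $\P(U(A))=A$ is the one we started with. Hence the unit $\eta_{A}=\id_{A}$ is automatically an isomorphism. For the counit, the adjunction of Proposition~\ref{prop:tc=u} converts the $B_{\infty}$ homomorphism $\id\colon\P(V)\to\P(V)$ into a unique $\dAs^{1,1}$-algebra map $\varepsilon_{V}\colon T^{c}(\P(V))\to V$ extending the inclusion $\P(V)\hookrightarrow V$. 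By the cofreeness of the tensor coalgebra (Remark~\ref{rk:tensorcoalg}(b)) this extension is also the unique unital coalgebra lift of the inclusion, so $\varepsilon_{V}$ is automatically a 2-associative differential bialgebra homomorphism. Its image, being conilpotent, lies in $R(V)$, and under the identifications of the first paragraph $\varepsilon_{V}$ coincides with $F_{V}$. Therefore $\varepsilon_{V}$ is always a monomorphism, and it is an isomorphism if and only if $V=R(V)$, i.e., if and only if $V$ is conilpotent.

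The triangle identities follow immediately: $\P(\varepsilon_{V})\circ\eta_{\P(V)}=\id$ since $\varepsilon_{V}$ restricts to the identity on primitives, and $\varepsilon_{U(A)}\circ U(\eta_{A})=\id_{U(A)}$ since both sides are $\dAs^{1,1}$-algebra endomorphisms of $T^{c}(A)$ restricting to the inclusion $A\hookrightarrow T^{c}(A)$, hence agree by the universal property of $U(A)=T^{c}(A)$. This establishes $U\dashv\P$, and restricting to conilpotent bialgebras (where $\varepsilon_{V}$ is an isomorphism) gives the claimed adjoint equivalence; the coreflector is $V\mapsto R(V)\cong U(\P(V))$. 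I expect the main obstacle to lie in the first paragraph: the underlying $B_{\infty}$ operations on $V$ are built from recursive twisting formulas that mix $\pdot$ with iterated $\qdot$, and a priori it is not transparent that such expressions return primitive outputs on primitive inputs. The argument only comes out cleanly because passing to $R(V)\cong T^{c}(\P(V))$ reduces us to precisely the cofree situation already handled by Theorem~\ref{thm:incl-hm-b-inf}.
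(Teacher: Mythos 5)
Your proposal is correct and follows essentially the same route as the paper: both reduce to the conilpotent radical $R(V)$, invoke Proposition~\ref{prop:LR-inf-class} to identify it with $T^{fc}(\P(V))$, apply Theorem~\ref{thm:incl-hm-b-inf} to see that the inclusion of primitives is a $B_\infty$ homomorphism, and identify the counit with the embedding $U(\P(V))\cong T^c(\P(R(V)))\stackrel\cong\to R(V)\subseteq V$. The only cosmetic difference is that the paper justifies closure of $R(V)$ under the underlying $B_\infty$ operations by pointing at the recursions \eqref{eq:mij} and \eqref{eq:mn}, whereas you fold this into the transport of structure along $F_V$; the substance is the same.
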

\begin{proof}
  If $(V,\partial,\pdot,\qdot,1,\Delta,\varepsilon)$ is a 2-associative differential bialgebra then the conilpotent radical is a 2-associative differential subbialgebra: it is closed under $\partial$, $\pdot$, $\Delta$ by the usual differential bialgebra axioms and is closed under $\qdot$ by Lemma~\ref{lem:inf-delta-prim} (2).
  It therefore follows from the inductive definitions
\eqref{eq:mij} and \eqref{eq:mn}
that it is closed under the underlying $B_{\infty}$ structure maps.

It remains to show that if $(W,\partial,\pdot,\qdot,1,\Delta,\varepsilon)$ is a conilpotent 2-associative differential bialgebra then the underlying $B_{\infty}$ structure retricts to the primitives.
By Proposition \ref{prop:LR-inf-class} we know that $W$ is naturally isomorphic as a unital infinitesimal bialgebra to $T^{fc}(\P(W))$, so we can assume without loss of generality that
$(W,\partial,\pdot)=(T^c(\P(W)),\partial,\pdot)$.
Thus $\P(W)$ has a  $B_\infty$ structure
and $\P(W)\to W$ is the inclusion of a  $B_\infty$ subalgebra
by Theorem~\ref{thm:incl-hm-b-inf}.

Thus, for a general 2-associative differential bialgebra $V$, we have inclusions of $B_{\infty}$ subalgebras $\P(V)=\P(R(V))\to R(V)\to V$, and the $B_{\infty}$ algebra homomorphism $A\to T^{c}(A)=U(A)$ of Theorem~\ref{thm:incl-hm-b-inf} gives a $B_{\infty}$ algebra isomorphism onto its image, \[\eta_{A}\colon A\stackrel\cong\longrightarrow \P(U(A)).\]
The extension of the inclusion $\P(V)\to V$ of the underlying $B_{\infty}$ subalgebra defines the counit \[\varepsilon_V\colon U(\P(V))\longrightarrow V.\]
This factors  as an isomorphism onto its image, the conilpotent radical:
\[\varepsilon_{V\colon }U(\P(V))= T^{c}(\P(R(V)))\stackrel\cong\longrightarrow R(V)\subseteq V.\]
\end{proof}

\begin{corollary}
The categories of $B_\infty$ algebras, of conilpotent 2-associative differential bialgebras and of  cofree 2-associative differential bialgebras
are equivalent.
\end{corollary}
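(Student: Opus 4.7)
The plan is to deduce the three-way equivalence from two ingredients. First, Theorem~\ref{thm:milnor-moore} already establishes an adjoint equivalence between $B_{\infty}$-alg and the full subcategory of conilpotent 2-associative differential bialgebras. Second, I would show that the full subcategories of conilpotent and of cofree objects inside $\dAs^{1,1}\text{-bialg}$ actually coincide; combining the two observations then yields the stated three-way equivalence.

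For the coincidence of the two subcategories, I would argue the containments separately. Every cofree 2-associative differential bialgebra is conilpotent: by definition its underlying coalgebra is isomorphic to some $T^{c}(V)$, and $T^{c}(V)$ is conilpotent as recalled in Section~\ref{sub:tensorcoalg}. Conversely, let $V$ be a conilpotent 2-associative differential bialgebra. Forgetting the operation $\pdot$ and the differential $\partial$, the remaining structure $(V,\qdot,1,\Delta,\varepsilon)$ is a conilpotent unital infinitesimal bialgebra. Proposition~\ref{prop:LR-inf-class} then supplies a natural isomorphism $F_{V}\colon T^{fc}(\P(V))\stackrel\cong\longrightarrow V$ of unital infinitesimal bialgebras, and by Example~\ref{ex:fund-inf-bialg} the underlying coalgebra of $T^{fc}(\P(V))$ is the tensor coalgebra $T^{c}(\P(V))$. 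Hence $V$ is cofree.

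Assembling these pieces, the full subcategory of cofree objects in $\dAs^{1,1}\text{-bialg}$ equals the full subcategory of conilpotent ones, and Theorem~\ref{thm:milnor-moore} provides the equivalence of either of these with $B_{\infty}$-alg. I do not anticipate any serious obstacle: the substantive content is already subsumed by Theorem~\ref{thm:milnor-moore} and Proposition~\ref{prop:LR-inf-class}, and the corollary amounts to a bookkeeping assembly of those two results, paralleling the relationship between conilpotence and cofreeness for plain unital infinitesimal bialgebras recorded in Corollary~\ref{cor:corefl}.
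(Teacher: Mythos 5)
Your proposal is correct and follows essentially the same route as the paper: the paper's proof also deduces the corollary from Theorem~\ref{thm:milnor-moore} together with the observation that conilpotent implies cofree because $V\cong T^c(\P(V))$ (which is exactly your appeal to Proposition~\ref{prop:LR-inf-class} and Example~\ref{ex:fund-inf-bialg}). You merely spell out the easy converse containment (cofree implies conilpotent) that the paper leaves implicit.
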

\begin{proof}
  This follows from the theorem, noting that conilpotent implies cofree for $\dAs^{1,1}$ bialgebras since $V\cong T^c(\P(V))$.
\end{proof}
\begin{remark}
The category of cofree differential Hopf algebras $(T^c(A),d,\mu,\Delta)$ has essentially the same objects as the categories above, but more morphisms.
\end{remark}

\end{document}